
\documentclass[10pt,a4paper]{article}
\usepackage{amssymb}
\usepackage{amsmath, enumerate,amsthm,wasysym}
\usepackage{epsfig}
\textwidth16cm
\textheight24.4cm
\topmargin-1.2cm
\setlength{\oddsidemargin}{-0.4cm}
\setlength{\evensidemargin}{-0.4cm}
\setcounter{MaxMatrixCols}{10}
\newtheorem{theorem}{Theorem}

\newtheorem{corollary}[theorem]{Corollary}

\newtheorem{definition}[theorem]{Definition}
\newtheorem{example}[theorem]{Example}

\newtheorem{lemma}[theorem]{Lemma}

\newtheorem{proposition}[theorem]{Proposition}
\newtheorem{remark}[theorem]{Remark}

\begin{document}
\title{ Products of free random variables and $k$-divisible non-crossing partitions}
\author{Octavio Arizmendi\footnote{Supported by Deutsche Forschungsgemeinschaft (DFG), Project SP419/8-1. E-mail: arizmendi@math.uni-sb.de} and Carlos Vargas\footnote{Supported by Mexican National Council of Science and Technology (CONACYT) ref. 214839/310129. E-mail: carlos@math.uni-sb.de} \\ Universit\"{a}t des Saarlandes, FR $6.1-$Mathematik,\\ 66123 Saarbr\"{u}cken, Germany \\ }
\date{\today}
\maketitle

\begin{abstract}
We derive a formula for the moments and the free cumulants of the
multiplication of  $k$ free random variables in terms of $k$-equal and $k$-divisible non-crossing partitions. This leads to a new simple proof for the bounds of the right-edge of the support of the free multiplicative convolution $\mu^{\boxtimes k}$, given by Kargin in \cite{Kar}, which show that the support grows at most linearly with $k$. Moreover, this combinatorial approach generalizes the results of Kargin since we do not require the convolved measures to be identical. We also give further applications, such as a new proof of the limit theorem of Sakuma and Yoshida \cite{SaYo}.
\end{abstract}

\section{Introduction and statement of results}

Until recently, $k$-divisible non-crossing partitions have been overlooked in free probability and have barely appeared in the literature.
However, their structure is very rich and there are, for instance, quite natural bijections between $k$-divisible non-crossing partitions and $(k+1)$-equal partitions which preserve a lot of structure, (see e.g. \cite{Ar2}). Furthermore, as noticed in \cite{Ar1}, the moments of $\mu^{\boxtimes k}$ can be computed using $k$-divisible non-crossing partitions.

In this paper we exploit the fact that $k$-divisible and $k$-equal partitions are linked, by the Kreweras complement, to partitions which are involved in the calculation of moments and free cumulants of the product of $k$ free random variables. For details on free cumulants and their relevance in free probability, see \cite{NiSp}.

Given $a,b\in\mathcal{A}$ free random variables, with free cumulants  $\kappa_{n}(a)$ and $\kappa_{n}(b)$, respectively, one can calculate the free cumulants of $ab$ by
\begin{equation}\label{2prod}
\kappa_n(ab)=\sum_{\pi \in NC(n)}\kappa_{\pi }(a)\kappa_{Kr(\pi )}(b),
\end{equation}
where $Kr(\pi)$ is the Kreweras complement of the non-crossing partition $\pi$. In particular, we are able to compute the free cumulants of the free multiplicative convolution of two compactly supported probability measures $\mu, \nu$, such that $Supp(\mu)\subseteq [0,\infty)$ by
\begin{equation}
\kappa_n(\mu \boxtimes \nu)=\sum_{\pi\in NC(n)}\kappa_{\pi
}(\mu)\kappa_{Kr(\pi )}(\nu) \label{cum-prod}.
\end{equation}
In principle, this formula could be inductively used to provide the free cumulants and moments of the convolutions of $k$ (not necessarily equal) positive probability measures. This approach, however, prevents us from noticing a deeper combinatorial structure behind such products of free random variables.

Our fundamental observation is that, when $\pi$ and $Kr(\pi)$ are drawn together, the  partition $\pi\cup Kr(\pi)\in NC(2n)$ is exactly the Kreweras complement of a $2$-equal partition (i.e. a non-crossing pairing). Furthermore, one can show using the previous correspondence that Equation (\ref{2prod}) may be rewritten as
\begin{equation}
\kappa_n(ab)=\sum_{\pi \in NC_2(n)}\kappa_{Kr(\pi)}(a,b,\dots,a,b),
\end{equation}
where $NC_2(n)$ denotes the $2$-equal partitions of $[2n]$.

Since $2$-equal partitions explain the free convolution of two variables, it is natural to try to describe the product of $k$ free variables in terms of $k$-equal partitions.

The main result of this work is the following.

\begin{theorem} \label{main}
Let $a_1,\dots ,a_k$ $\in (\mathcal{A},\tau)$ be free random variables. Then the free cumulants and the moments of $a:=a_1\dots a_k$ are given by
\begin{eqnarray}
\kappa_n(a)&=&\sum_{\pi \in NC_{k}(n)}\kappa_{Kr(\pi)}(a_1,\dots ,a_k), \label{kcum2}\\
\tau(a^n)&=& \sum_{\pi \in NC^k(n)}\kappa_{Kr(\pi)}(a_1,\dots ,a_k), \label{kmom2} 
\end{eqnarray}
where $NC_k(n)$ and $NC^k(n)$ denote, respectively, the $k$-equal and $k$-divisible partitions of $[kn]$.

\end{theorem}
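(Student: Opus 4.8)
The plan is to proceed by induction on $k$, using the formula \eqref{cum-prod} for the product of two free variables as the base case and a key combinatorial bijection between non-crossing partitions to push through the inductive step. Let me think about the structure.

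Base case $k=2$: Equation \eqref{2prod} gives $\kappa_n(a_1a_2) = \sum_{\pi \in NC(n)} \kappa_\pi(a_1)\kappa_{Kr(\pi)}(a_2)$. I need to rewrite this as $\sum_{\sigma \in NC_2(n)} \kappa_{Kr(\sigma)}(a_1,a_2,\dots,a_1,a_2)$. The claim in the excerpt is that $\pi \cup Kr(\pi) \in NC(2n)$ is precisely $Kr(\sigma)$ for a unique non-crossing pairing $\sigma \in NC_2(n)$, and that under this correspondence $\kappa_\pi(a_1)\kappa_{Kr(\pi)}(a_2) = \kappa_{Kr(\sigma)}(a_1,a_2,\dots,a_1,a_2)$ once we interleave the arguments $a_1,a_2,a_1,a_2,\dots$ on $[2n]$. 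So the base case reduces to verifying this one bijection explicitly and checking that the blocks of $\pi$ land on odd positions (carrying $a_1$) and the blocks of $Kr(\pi)$ land on even positions (carrying $a_2$), or vice versa.

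Inductive step: assume the formula holds for the product of $k-1$ free variables. Write $a = a_1\cdots a_k = (a_1\cdots a_{k-1})a_k = b a_k$ where $b := a_1\cdots a_{k-1}$. Note $b$ and $a_k$ are free. Apply \eqref{2prod} (base-case form) to get $\kappa_n(a)$ as a sum over pairings $\sigma \in NC_2(n)$ of $\kappa_{Kr(\sigma)}(b,a_k,b,a_k,\dots)$. Now each factor $\kappa_{V}(b)$ appearing (for $V$ a block of the relevant partition sitting on the $b$-positions) must be expanded via the inductive hypothesis into a sum over $NC_{k-1}$-type partitions with arguments $a_1,\dots,a_{k-1}$, and these must be combined with the factors $\kappa_W(a_k)$. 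The combinatorial heart of the matter is to show that nesting a $(k-1)$-equal partition structure inside the $a_k$-complement of a non-crossing pairing produces exactly the $k$-equal partitions of $[kn]$, with the Kreweras complement behaving correctly under this composition. I expect to use the fact (alluded to in the excerpt and \cite{Ar2}) that $k$-divisible non-crossing partitions of $[kn]$ are in natural bijection with $(k+1)$-equal partitions, together with the compatibility of the Kreweras complement with the "interval partition into $k$-blocks" refinement.

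The moment formula \eqref{kmom2} then follows from \eqref{kcum2} by the moment-cumulant relation $\tau(a^n) = \sum_{\rho \in NC(n)} \kappa_\rho(a)$: substituting \eqref{kcum2} for each $\kappa_{|V|}(a)$ with $V \in \rho$ and reassembling, one checks that summing a $k$-equal partition on each block of an arbitrary $\rho \in NC(n)$ is the same as summing over all $k$-divisible partitions of $[kn]$ — this is precisely the statement that $NC^k(n) = \bigcup_{\rho \in NC(n)} \{$partitions refining the $k$-fattening of $\rho$ into $k$-equal pieces$\}$, which is a standard decomposition of $k$-divisible non-crossing partitions.

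The main obstacle I anticipate is making the inductive step's bookkeeping precise: one must track \emph{which} of the $kn$ points carry which variable $a_i$ after repeatedly interleaving, and verify that the Kreweras complements compose correctly so that $Kr(\pi)$ for the final $k$-equal $\pi \in NC_k(n)$ assigns the arguments $(a_1,\dots,a_k)$ in the stated cyclic pattern. A cleaner alternative to induction would be to prove \eqref{kcum2} directly: start from the general moment-cumulant machinery for the product $a_1\cdots a_k$ (iterating \eqref{2prod}, or using the formula for mixed cumulants of products of free variables from \cite{NiSp}), obtain a sum over chains of Kreweras complements, and identify the resulting index set with $NC_k(n)$ in one stroke via the chain $\emptyset \le \pi_1 \le Kr(\pi_1) \le \cdots$ description of $k$-divisibility. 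I would pursue the induction first since the base case is already essentially given, and fall back on the direct approach if the bookkeeping becomes unwieldy.
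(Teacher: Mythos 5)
Your plan defers exactly the step that carries all of the mathematical content, so as it stands nothing is proved. In the inductive step, after writing $a=ba_k$ with $b=a_1\cdots a_{k-1}$ and expanding each factor $\kappa_{|W|}(b,\dots,b)$ by the inductive hypothesis, you must exhibit a bijection between the data (outer pairing $\sigma\in NC_2(n)$, inner $(k-1)$-equal partitions on the $b$-blocks of $Kr(\sigma)$) and $NC_k(n)$, \emph{and} verify that the Kreweras complements computed locally on each $b$-block reassemble into the global $Kr(\pi)$ of the resulting $k$-equal $\pi$, so that the arguments $a_1,\dots,a_k$ land in the stated cyclic pattern. You name this the ``combinatorial heart'' and the ``main obstacle'' but offer no argument for it; this nesting/relative-Kreweras compatibility is precisely the hard part, and it is not a routine bookkeeping matter. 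The paper does not induct at all: it applies the Krawczyk--Speicher formula (Proposition \ref{products}) to $\kappa_n(a_1\cdots a_k)$, notes that vanishing of mixed cumulants restricts the sum to $k$-preserving partitions, hence (with the condition $\pi\vee\rho^n_k=1_{nk}$) to $k$-completing ones, and then invokes Proposition \ref{Prop} (proved in the appendix via the insertion operations $I^k_r$, $\tilde I^k_r$ and the identity $Kr(I^k_r(\pi))=\tilde I^k_r(Kr(\pi))$) to identify these with the Kreweras complements of $k$-equal partitions; the moment formula \eqref{kmom2} is obtained in the same one-stroke way directly from the moment--cumulant formula, not deduced from \eqref{kcum2}. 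Your ``cleaner alternative'' remark gestures at this route, but you do not execute it either.

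Moreover, the decomposition you invoke for the moment step is false as stated. Take $k=2$, $n=2$: the $2$-equal refinements of the $2$-fattenings of $\rho\in NC(2)$ yield only $\{\{1,2\},\{3,4\}\}$ (twice, from $\rho=0_2$ and $\rho=1_2$) and $\{\{1,4\},\{2,3\}\}$, while $NC^2(2)$ also contains $1_4$; so the proposed union neither covers $NC^2(n)$ nor is disjoint. The correct statement is a bijection between $NC^k(n)$ and pairs $\bigl(\rho,(\sigma_V)_{V\in\rho}\bigr)$ with $\sigma_V\in NC_k(|V|)$, compatible with Kreweras complements so that $\kappa_{Kr(\pi)}(a_1,\dots,a_k)=\prod_{V}\kappa_{Kr(\sigma_V)}(a_1,\dots,a_k)$ --- in the example, $\rho=0_2$ must correspond to $\pi=1_4$ (whose complement is $0_4$), not to the refinement of its own fattening. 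Such a bijection does exist (the Fuss--Catalan counts match), but proving it is essentially the same unestablished nesting lemma as in your inductive step; thus both halves of your proposal rest on the same missing combinatorial argument, which in the paper is isolated and proved as Proposition \ref{Prop}.
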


The main application of our formulas is a new proof of the fact, first proved by Kargin \cite{Kar}, that for positive measures centered at $1$, the support of the free multiplicative convolution $\mu^{\boxtimes k}$ grows at most linearly. Moreover, our approach enables us to generalize to the case $\mu_1\boxtimes \dots \boxtimes \mu_k$, as follows.

\begin{theorem}
There exists a universal constant $C>0$ such that for all $k$ and any $\mu_1,\dots ,\mu_k$ probability measures supported on $[0,L]$, satisfying $E(\mu_i)=1$ and $Var(\mu_i)\geq\sigma^2$, for $i=1,\dots ,k$, the supremum $L_k$ of the support of the measure $\mu_1\boxtimes \dots \boxtimes \mu_k$ satisfies
 \[\sigma^2k\leq L_k<CLk.\] 
\end{theorem}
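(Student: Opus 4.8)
The plan is to obtain the two bounds separately, the lower bound by a direct variance computation and the upper bound by estimating the moments $\tau(a^n)$ via Theorem \ref{main}.

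For the lower bound $\sigma^2 k \le L_k$, recall that for a compactly supported probability measure $\rho$ on $[0,\infty)$ with supremum $L(\rho)$ of its support one has $L(\rho) \ge \tau_\rho(x^2)/\tau_\rho(x)$ (since $\tau_\rho(x^2) \le L(\rho)\,\tau_\rho(x)$). So it suffices to show that the second moment of $a = a_1\cdots a_k$ is at least $1 + \sigma^2 k$, where $a_i$ has distribution $\mu_i$. Since $E(\mu_i)=1$, we have $\tau(a_i)=1$ and $\tau(a_i^2) = 1 + \mathrm{Var}(\mu_i) \ge 1 + \sigma^2$. Using freeness (or directly formula \eqref{kmom2} with $n=2$: the $k$-divisible non-crossing partitions of $[2k]$ that contribute are the minimal one $\hat 0$ and the $\binom{k}{1}$-type partitions pairing one block across the two copies), one computes $\tau(a^2) = \prod_{i=1}^k \tau(a_i^2) - \text{(lower order)} \ge \prod_{i=1}^k(1+\sigma^2) \ge 1 + k\sigma^2$; the first moment is $\prod_i \tau(a_i) = 1$. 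This gives $L_k \ge \tau(a^2)/\tau(a) \ge 1 + \sigma^2 k \ge \sigma^2 k$. I would double-check the exact combinatorial expansion of $\tau(a^2)$, but the inequality $\tau(a^2)\ge \prod_i\tau(a_i^2)$ should follow cleanly from positivity of the omitted terms, or alternatively from the subadditivity of the free cumulant $\kappa_2$ is not needed — $\tau(a^2)\ge \prod\tau(a_i^2)$ is the natural statement.

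For the upper bound, the key point is that $L_k = \lim_{n\to\infty} \tau(a^n)^{1/n}$, so I need a bound of the form $\tau(a^n) \le (CLk)^n$ uniformly in $n$ and $k$. By \eqref{kmom2}, $\tau(a^n) = \sum_{\pi \in NC^k(n)} \kappa_{Kr(\pi)}(a_1,\dots,a_k)$. Each block of $Kr(\pi)$ of size $j$ sitting in ``color'' $i$ contributes a factor $\kappa_j(a_i)$, and since $\mathrm{supp}(\mu_i)\subseteq[0,L]$ with mean $1$ one has the standard estimate $|\kappa_j(a_i)| \le (2L)^{j}$ or so (free cumulants of a measure on $[0,L]$ are dominated by those of a point mass scaled appropriately — more carefully, $|\kappa_j(\mu_i)| \le C_0 L^j$ for a universal $C_0$, via the Cauchy transform / the bound $|\kappa_j|\le 16^j L^j$ type estimate). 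Hence $\tau(a^n) \le \sum_{\pi\in NC^k(n)} C_0^{|Kr(\pi)|} L^{(\text{total size}) } = (C_0)^{?} L^{kn} \cdot \#\{\text{relevant terms weighted}\}$, where the total size of all blocks of $Kr(\pi)$ is $kn$. So everything reduces to a counting/generating-function estimate: $\tau(a^n) \le L^{kn} \sum_{\pi \in NC^k(n)} C_0^{|Kr(\pi)|}$, and one must show this sum is at most $(C k)^n$ for a universal $C$ (absorbing $L^{kn}$ into $L^n$ is wrong — rather, note $\mu_i$ has mean $1$, which forces cancellations: one should not bound $|\kappa_j|$ by $L^j$ naively but use $\kappa_1 = 1$ and $|\kappa_j|\le C L^{j-1}$ for $j\ge 2$, since the mean being $1$ makes the measure ``concentrated near $1$'' up to scale $L$). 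With $\kappa_1=1$, a block of size $1$ contributes $1$, and the generating function for $\sum_{\pi\in NC^k(n)}\prod_{\text{blocks}} w_{|b|}$ with $w_1=1$, $w_j \le (CL)^{j-1}$ is controlled by a Fuss–Catalan-type functional equation, giving growth $(C'Lk)^n$.

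The main obstacle will be the bookkeeping in this last counting step: one needs the right weight estimate on free cumulants (exploiting $E(\mu_i)=1$ so that the size-one blocks are ``free'', which is exactly what keeps the bound linear rather than exponential in $k$), and then a clean argument that $\sum_{\pi\in NC^k(n)} \prod_b w_{|b|} \le (CLk)^n$. I would handle this either by the Fuss–Catalan generating function for $k$-divisible non-crossing partitions (the number of $\pi\in NC^k(n)$ is $\frac{1}{kn+1}\binom{(k+1)n}{n}$, which already is $\le (e(k+1))^n$), together with the observation that inserting weights $w_j\le (CL)^{j-1}$ at blocks only changes the base of the exponential by a universal constant — because each non-size-one block of size $j$ ``uses up'' $j-1$ extra points and is charged $(CL)^{j-1}$, i.e. $CL$ per extra point, and there are at most $kn$ points total. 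Assembling these gives $\tau(a^n)^{1/n} \le CLk$, hence $L_k < CLk$; a strict inequality then follows since the measure is not a point mass (it has positive variance), so the bound is not attained. This matches and generalizes Kargin's result in \cite{Kar}.
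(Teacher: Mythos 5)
Your overall strategy mirrors the paper's, but both halves as written contain a genuine problem. For the lower bound, the inequality $\tau(a^2)\ge\prod_i\tau(a_i^2)$ that you lean on is false: for mean-one measures one has exactly $m_2(\mu_1\boxtimes\cdots\boxtimes\mu_k)=1+\sum_i Var(\mu_i)$, which is \emph{strictly smaller} than $\prod_i\bigl(1+Var(\mu_i)\bigr)$ as soon as two of the variances are positive (already for $k=2$, $m_2(\mu\boxtimes\nu)=m_2(\mu)+m_2(\nu)-1$); the ``omitted terms'' do not have the positivity you hope for. The bound you actually need, $\tau(a^2)\ge 1+k\sigma^2$, is nevertheless true, and the correct justification is the additivity of the variance under $\boxtimes$ for mean-one measures (Remark \ref{variance}, an immediate consequence of (\ref{kmom}) with $n=2$); combined with $m_2\le L_k\, m_1=L_k$ this gives $L_k\ge 1+k\sigma^2$, which is exactly Proposition \ref{lowbound}. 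So the skeleton survives, but the stated step must be replaced, not merely ``double-checked''.

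For the upper bound, the decisive counting fact is missing. After bounding the cumulants by $|\kappa_1|=1$ and $|\kappa_j(\mu_i)|\le(\tilde L)^{j-1}$ with $\tilde L=26L$ (this needs the mean-one refinement $m_n(\mu_i)\le L^{n-1}$ together with the M\"obius bound, as in Lemma \ref{Lbound}; an off-the-shelf estimate of the form $|\kappa_j|\le C^jL^{j}$ is not enough), the weight of a single $\pi\in NC^k(n)$ is $(\tilde L)^{kn-|Kr(\pi)|}$, and the whole point is that this exponent is at most $n-1$: a $k$-divisible partition of $[kn]$ has at most $n$ blocks, so $|Kr(\pi)|=kn+1-|\pi|\ge(k-1)n+1$. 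Your accounting (``at most $kn$ points total'') would only give a weight $(\tilde L)^{kn}$ per partition, hence a bound of order $\bigl((\tilde L)^{k}e(k+1)\bigr)^n$, which is exponential rather than linear in $k$; and the Fuss--Catalan functional-equation step you defer to is precisely where the work would have to happen. With the correct excess bound the argument closes immediately: $m_n(\mu)\le |NC^k(n)|\,(\tilde L)^n=\frac{1}{kn+1}\binom{(k+1)n}{n}(\tilde L)^n$, whose $n$-th root tends to $(k+1)^{k+1}k^{-k}\tilde L\le e(k+1)\tilde L$, giving $L_k\le 26e(k+1)L$ (Proposition \ref{support}); the strict inequality $L_k<CLk$ is then had by enlarging the universal constant, not by a non-atomicity argument. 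Finally, note that $L_k=\limsup_n m_n(\mu)^{1/n}$ should be justified by passing to the positive element $b=b_1\cdots b_k b_k\cdots b_1$ with the same moments as $a=a_1\cdots a_k$, since $a$ itself is not self-adjoint.
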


In other words, for (not necessarily identically distributed) positive free random variables $(X_i)_{i\geq 1}$ such that $E(X_i)=1$, $Var(X_i)\geq \sigma^2$ and $||X_i||\leq L$, $i\geq 1$,  we have that
 \[\lim \sup_{n\to \infty} n^{-1}||X_1^{1/2} \cdots X_{n-1}^{1/2}X_nX_{n-1}^{1/2} \cdots X_1^{1/2}||<CL\]
and
 \[\lim \inf_{n\to \infty} n^{-1} ||X_1^{1/2} \cdots X_{n-1}^{1/2}X_nX_{n-1}^{1/2} \cdots X_1^{1/2}||\geq\sigma^2.\]

Let us point out that for the case $\mu_1=\dots=\mu_k$, the previous theorem can be proved as using the methods of \cite{KeSp}.
However, the norm estimates given there are meant to address more general situations (where certain linear combinations of products are allowed) and hence, the constants obtained using these methods for our specific problem are far from optimal.

Finally, we shall mention, that when $\mu_1=\dots=\mu_k$, Kargin proved in \cite{Kar2} that \[\lim_{k\to\infty}k^{-1}L_k=e\sigma^2.\]

The paper is organized as follows: Section 2 includes preliminaries on non-crossing partitions and the basic definitions from free probability that will be required. Some enumerative aspects of $k$-divisible partitions and their Kreweras complement are included. In Section 3 we prove our main formulae for the products of free random variables. We also express our results in terms of free multiplicative convolutions of compactly supported positive probability measures. Our formulas are used in Section 4 to derive bounds for the support of free multiplicative convolutions. Section 5 gives further applications and examples of our formulas. We calculate the free cumulants of products of free Poissons and shifted semicirculars. We show that the $n$-th cumulant of $\mu^{\boxtimes k}$ eventually becomes positive as $k$ grows. We also provide a new proof of the limit theorem of Sakuma and Yoshida \cite{SaYo}.

\section*{Acknowledgments}
The authors would like to thank Professor Vladislav Kargin for pointing out reference \cite{Kar2}. They are grateful to Professor Roland Speicher for valuable comments and discussions.

\section{Preliminaries}

\subsection{Non-crossing partitions}

A \textit{partition} of a finite, totally ordered set $S$ is a decomposition $\pi =\{V_{1},\dots,V_{r}\}$ of $S$
into pairwise disjoint, non-empty subsets $V_{i}$, $(1\leq i\leq r)$, called \textit{blocks}, such that $V_{1}\cup V_{2}\dots \cup V_{r}=S$. The number of blocks of 
$\pi $ is denoted by $\left\vert \pi \right\vert$.

A partition $\pi =\{V_{1},\dots,V_{r}\}$ is called \textit{non-crossing} if for every $4$-tuple $a < b < c < d\in S$
such that $a,c\in V_{i}$ and $b,d\in V_{j}$, we have that $V_j=V_i$. We denote by $NC(S)$ the set of non-crossing partitions of $S$. For $S=[n]:=\{1, 2,\dots, n\}$ we simply write $NC(n)$.

$NC(S)$ can be equipped with the partial order $\preceq$ of reverse refinement ($\pi\preceq\sigma$ if and only if every block of $\pi$ is completely contained in a block of $\sigma$). This turns $NC(S)$ into a lattice, in particular, we can talk about the least upper bound $\pi_1\vee\pi_2\in NC(S)$ of two non-crossing partitions $\pi_1,\pi_2\in NC(S)$. 
We denote the minimum and maximum partitions of $NC(n)$ by $0_n$,  $1_n$, respectively.  We will write \[\rho_k^n:=\{(1,\dots,k),(k+1,\dots,2k),\dots,((k-1)n+1,\dots,kn)\}\] for the  partition in $NC(kn)$ consisting of $n$ consecutive blocks of size $k$.

\begin{definition}
We say that a non-crossing partition $\pi$ is $k$-divisible if the sizes of all the blocks of are multiples of $k$.  If, furthermore, all the blocks are exactly of size $k$, we say that $\pi$ is $k$-equal. A partition $\pi \in NC(nk)$ is called $k$-preserving if all its blocks contain numbers with the same congruence modulo $k$. A $k$-preserving partition $\pi\in NC(nk)$ is called $k$-completing if $\pi \vee \rho_k^n=1_{nk}$.
\end{definition}

Later we will see that these concepts are closely related. We denote the set of $k$-divisible non-crossing partitions of $[kn]$ by $NC^k(n)$ and the set of  $k$-equal  non-crossing partitions of $[kn]$ by $NC_k(n)$.

It is helpful to picture partitions via their circular representation: We think of $[n]$ as the clockwise labelling of the vertices of a regular $n$-gon.
If we identify each block of $\pi\in NC(n)$ with the convex hull of its
corresponding vertices, then we see that $\pi $ is non-crossing precisely
when its blocks are pairwise disjoint (that is, they don't cross). 

\begin{figure}
  \begin{center}
    \includegraphics[scale=0.4]{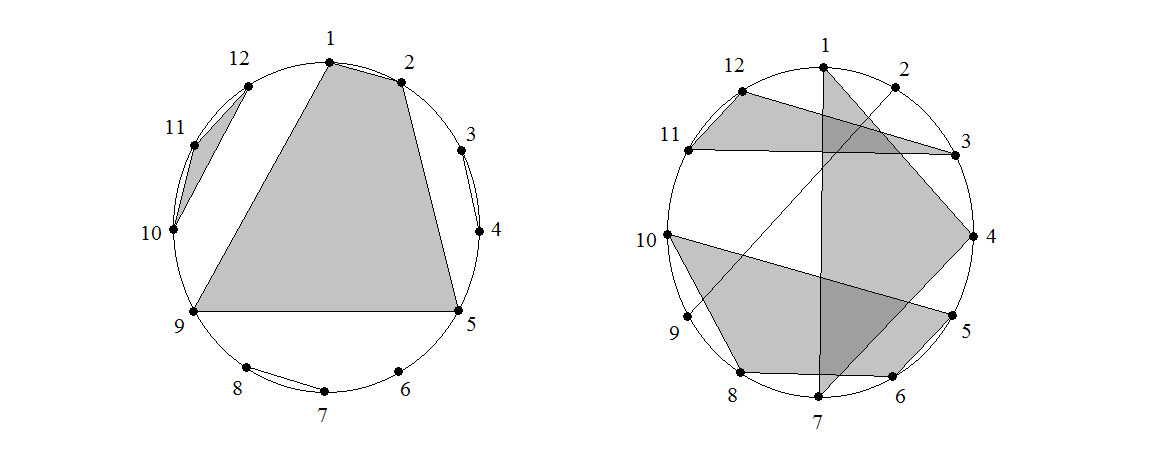}
    \caption{The non-crossing partition $
\{\{1,2,5,9\},\{3,4\},\{6\},\{7,8\},\{10,11,12\}\}$, and the crossing
partition $\{\{1,4,7\},\{2,9\},\{3,11,12\},\{5,6,8,10\}\}$ of the set $[12]$ in their circular representations.}
  \end{center}
\end{figure}
It is well known that the number of non-crossing partitions is given by the Catalan numbers $C_n:=\frac{1}{n+1}\binom{2n}{n}$.
The following formula (proved by Kreweras \cite{Kr}) counts the number of partitions of a given type.

\begin{proposition}
\label{NC type}Let $n$ be a positive integer and let $r_{1},r_{2},\dots, r_{n}\in 
\mathbb{N}
\cup \{0\}$ be such that $r_{1}+2r_{2}+\dots+nr_{n}=n$. Then the number of
partitions of $\pi $ in $NC(n)$ with $r_{1}$ blocks with $1$ element, $r_{2}$
blocks with $2$ elements, $\dots$, $r_{n}$ blocks with $n$ elements equals
\begin{equation}
\frac{n!}{\left(n+1-\sum\limits_{i=1}^{n}r_{i}\right)!\prod\limits_{i=1}^{n}r_{i}!}
.\label{rtype partitions}
\end{equation}
\end{proposition}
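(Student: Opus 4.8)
\noindent\emph{Proof strategy.} The plan is to prove Kreweras' formula by a weighted generating-function computation combined with Lagrange inversion. Introduce commuting indeterminates $t_1,t_2,\dots$, one for each possible block size, and for $\pi\in NC(n)$ let $b_i(\pi)$ be the number of blocks of $\pi$ of size $i$. Form the series
\[
C(x)\;=\;1+\sum_{n\geq 1}x^n\!\!\sum_{\pi\in NC(n)}\ \prod_{i\geq 1}t_i^{\,b_i(\pi)}\ \ \in\ \ \mathbb{Q}[[t_1,t_2,\dots]][[x]].
\]
The quantity to be computed in \eqref{rtype partitions} is exactly the coefficient of $x^n\prod_i t_i^{\,r_i}$ in $C(x)$; the hypothesis $r_1+2r_2+\dots+nr_n=n$ is merely the condition that a partition weighted by $\prod_i t_i^{\,r_i}$ actually be a partition of $[n]$.

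First I would derive the functional equation
\[
C(x)\;=\;1+\sum_{k\geq 1}t_k\,x^k\,C(x)^k
\]
via the standard first-block decomposition: if $\pi\in NC(n)$ and the block containing $1$ is $\{1=j_1<j_2<\dots<j_k\}$, then, $\pi$ being non-crossing, no other block can straddle two of the $k$ consecutive (possibly empty) intervals $\{j_1+1,\dots,j_2-1\},\dots,\{j_{k-1}+1,\dots,j_k-1\},\{j_k+1,\dots,n\}$, so $\pi$ restricts on each of them to an arbitrary non-crossing partition, and this data recovers $\pi$ uniquely. Summing over $k$ and over the $k$ sub-partitions — the new block contributing the weight $t_k$ and the factor $x^k$ for its $k$ elements — gives the identity. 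Verifying that this correspondence is a genuine bijection (non-crossingness preserved and reflected, block-size counts adding correctly) is the one step that requires real care; I expect it to be the main obstacle, although it is classical.

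Next I would solve the equation. Setting $w:=x\,C(x)$, which satisfies $w(0)=0$, the functional equation rewrites as $w=x\,\Phi(w)$ with $\Phi(w):=1+\sum_{k\geq 1}t_k w^k$, and $\Phi(0)=1\neq 0$, so Lagrange inversion applies and gives
\[
[x^n]\,C(x)\;=\;[x^{n+1}]\,w\;=\;\frac{1}{n+1}\,[w^n]\,\Phi(w)^{\,n+1}.
\]
Expanding $\Phi(w)^{n+1}=\bigl(1+\sum_{k\geq 1}t_k w^k\bigr)^{n+1}$ by the multinomial theorem, the coefficient of $w^n\prod_i t_i^{\,r_i}$ — which is nonzero only when $\sum_i i\,r_i=n$, and then $\sum_i r_i\leq n$, so every factorial below is that of a nonnegative integer — equals the multinomial coefficient $\binom{n+1}{\,r_1,\dots,r_n,\;n+1-\sum_i r_i\,}=\dfrac{(n+1)!}{\bigl(n+1-\sum_i r_i\bigr)!\,\prod_i r_i!}$. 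Multiplying by $\tfrac1{n+1}$ yields precisely $\dfrac{n!}{\bigl(n+1-\sum_i r_i\bigr)!\,\prod_i r_i!}$, which is \eqref{rtype partitions}.

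Everything apart from the bijective verification of the first-block decomposition is formal: the power-series manipulations take place in $\mathbb{Q}[[t_1,t_2,\dots]][[x]]$, and Lagrange inversion is legitimate because $\Phi(0)\neq 0$. (One could instead give a purely bijective argument — for instance encoding a non-crossing partition as a suitable sequence and applying a cycle-lemma count — but the route above is the shortest self-contained one.)
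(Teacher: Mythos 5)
Your proof is correct. Note, however, that the paper does not prove Proposition \ref{NC type} at all: it simply cites Kreweras \cite{Kr} and uses the count as a known fact, so there is no internal proof to compare against. Your argument is a legitimate, self-contained derivation by one of the standard routes: the first-block decomposition is sound (the block of $1$, say $\{1=j_1<\dots<j_k\}$, cannot be straddled by any other block, since an element in one gap and an element in a later gap would form a crossing with $1$ and an intermediate $j_i$; and conversely gluing arbitrary non-crossing partitions into the $k$ gaps produces no crossings, with block-size multisets and element counts adding as required), which yields $C(x)=1+\sum_{k\geq 1}t_k x^k C(x)^k$, and then $w=xC(x)$ satisfies $w=x\Phi(w)$ with $\Phi(w)=1+\sum_k t_k w^k$, $\Phi(0)=1\neq 0$, so Lagrange inversion over the coefficient ring $\mathbb{Q}[[t_1,t_2,\dots]]$ (for fixed $n$ only $t_1,\dots,t_n$ matter, so one may even truncate to a polynomial ring) gives $[x^n]C=\frac{1}{n+1}[w^n]\Phi(w)^{n+1}$, and the multinomial extraction of the coefficient of $w^n\prod_i t_i^{r_i}$, legitimate because $\sum_i r_i\leq\sum_i i\,r_i=n$ keeps $n+1-\sum_i r_i\geq 1$, produces exactly \eqref{rtype partitions}. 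This is essentially the classical generating-function proof of Kreweras' formula; compared with Kreweras' original recursive/bijective argument it trades combinatorial bijections for a short formal computation, at the cost of invoking Lagrange inversion, and it has the side benefit of also delivering Corollary \ref{PartexactK} and Proposition \ref{k-divisible} immediately by specializing the $t_i$.
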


From the previous proposition one can easily count $k$-equal partitions.
\begin{corollary}
\label{PartexactK}
\begin{equation*}
|NC_k(n)|=\frac{\binom{kn}{n}}{(k-1)n+1}.
\end{equation*}
\end{corollary}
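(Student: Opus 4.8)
The plan is to specialize Kreweras's counting formula (Proposition~\ref{NC type}) to the case where every block has the same size $k$. First I would observe that any $\pi\in NC_k(n)\subseteq NC(kn)$ has, by definition, all blocks of size exactly $k$, and since the blocks partition a ground set of $kn$ elements, $\pi$ must have exactly $n$ blocks. Hence, in the notation of Proposition~\ref{NC type} applied with its ``$n$'' replaced by $kn$, the only admissible block-size count vector is $r_k=n$ and $r_i=0$ for all $i\neq k$; I would check that this satisfies the hypothesis of that proposition, namely $\sum_{i=1}^{kn} i\,r_i = k\cdot n = kn$.

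Next I would substitute this type vector into Formula~(\ref{rtype partitions}). Since $\sum_i r_i = n$ and $\prod_i r_i! = n!$, this yields
\[
|NC_k(n)| \;=\; \frac{(kn)!}{\bigl(kn+1-n\bigr)!\;n!} \;=\; \frac{(kn)!}{\bigl((k-1)n+1\bigr)!\;n!}.
\]
It then remains only to massage this expression into the stated form. Writing $\bigl((k-1)n+1\bigr)! = \bigl((k-1)n+1\bigr)\cdot\bigl((k-1)n\bigr)!$ and pulling out the scalar factor, I obtain
\[
|NC_k(n)| \;=\; \frac{1}{(k-1)n+1}\cdot\frac{(kn)!}{n!\,\bigl((k-1)n\bigr)!} \;=\; \frac{1}{(k-1)n+1}\binom{kn}{n},
\]
which is exactly the claimed identity.

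There is essentially no genuine obstacle in this argument; it is a direct corollary of the already-stated proposition. The only points requiring a little care are the bookkeeping of which ``$n$'' in Proposition~\ref{NC type} plays which role once we pass to the ground set $[kn]$, and confirming that the degenerate type vector $(0,\dots,0,n,0,\dots,0)$ (with the entry $n$ in position $k$) does satisfy the arithmetic constraint of that proposition. Once those are settled, the remaining step is purely the elementary factorial manipulation displayed above.
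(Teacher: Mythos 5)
Your proof is correct and is exactly the argument the paper intends: the corollary is stated as an immediate consequence of Proposition~\ref{NC type}, obtained by specializing the type vector to $r_k=n$, $r_i=0$ for $i\neq k$ on the ground set $[kn]$ and simplifying the factorials, just as you do.
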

Finally, the number of $k$-divisible partitions was essentially given by Edelman \cite{Ede}.

\begin{proposition}
\label{k-divisible}
\begin{equation*}
|NC^k(n)|=\frac{\binom{(k+1)n}{n}}{kn+1}.
\end{equation*}
\end{proposition}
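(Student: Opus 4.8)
The plan is to count by block-size type, apply Kreweras' enumeration formula (Proposition~\ref{NC type}), and then collapse the resulting double sum with a Vandermonde identity. A non-crossing partition $\pi\in NC(kn)$ is $k$-divisible exactly when every block has size a multiple of $k$, so I would record such a $\pi$ by the data $(r_1,\dots ,r_n)$, where $r_j$ is the number of blocks of $\pi$ of size $jk$. The divisibility constraint $\sum_j jk\,r_j=kn$ becomes simply $\sum_{j=1}^n j\,r_j=n$, and the total number of blocks $B:=\sum_j r_j$ then automatically satisfies $1\le B\le n$. Applying Proposition~\ref{NC type} to the ground set $[kn]$ (with the multiplicity of size $jk$ equal to $r_j$ and all other multiplicities zero) shows that the number of $k$-divisible partitions of a prescribed type is $\dfrac{(kn)!}{(kn+1-B)!\,\prod_j r_j!}$, whence
\begin{equation*}
|NC^k(n)|=\sum_{B=1}^{n}\frac{(kn)!}{(kn+1-B)!}\sum_{\substack{r_1,\dots ,r_n\ge 0\\ \sum_j jr_j=n,\ \sum_j r_j=B}}\frac{1}{\prod_{j}r_j!}.
\end{equation*}

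The next step is to evaluate the inner sum, which is independent of $k$. Since $\sum_{r\ge 0}z^{jr}/r!=e^{z^{j}}$, the bivariate generating function $\sum_{(r_j)}x^{\sum jr_j}y^{\sum r_j}/\prod_j r_j!$ equals $\exp\!\big(y\sum_{j\ge 1}x^{j}\big)=\exp\!\big(yx/(1-x)\big)$, so the inner sum is
\begin{equation*}
[x^{n}]\,\frac{1}{B!}\Big(\frac{x}{1-x}\Big)^{B}=\frac{1}{B!}\,[x^{\,n-B}](1-x)^{-B}=\frac{1}{B!}\binom{n-1}{B-1}.
\end{equation*}
Substituting this and using $\dfrac{(kn)!}{(kn+1-B)!\,B!}=\dfrac{1}{kn+1}\binom{kn+1}{B}$, I would obtain $|NC^k(n)|=\dfrac{1}{kn+1}\sum_{B=1}^{n}\binom{kn+1}{B}\binom{n-1}{B-1}$.

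To finish, write $\binom{n-1}{B-1}=\binom{n-1}{n-B}$ and extend the summation to all $B$ (the added terms vanish); Vandermonde's identity gives $\sum_{B}\binom{kn+1}{B}\binom{n-1}{n-B}=\binom{(k+1)n}{n}$, which is exactly the claimed $|NC^k(n)|=\binom{(k+1)n}{n}/(kn+1)$. The only genuine work here is the bookkeeping in the inner sum; the rest is a direct application of Proposition~\ref{NC type} together with Vandermonde, so no step should pose a real obstacle. As a consistency check and an alternative route: the target value $\binom{(k+1)n}{n}/(kn+1)$ is precisely $|NC_{k+1}(n)|$ from Corollary~\ref{PartexactK}, so the proposition is equivalent to the existence of a bijection $NC^k(n)\to NC_{k+1}(n)$, and the structure-preserving bijection of \cite{Ar2} between $k$-divisible and $(k+1)$-equal non-crossing partitions yields the statement as well.
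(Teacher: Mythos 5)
Your argument is correct: the type-counting formula of Proposition~\ref{NC type} applied to $[kn]$ with block sizes restricted to multiples of $k$, the evaluation of the inner sum as $\frac{1}{B!}\binom{n-1}{B-1}$ (a composition count, which your generating-function computation gets right, the extra terms $j>n$ being killed by extracting $[x^n]$), the rewriting $\frac{(kn)!}{(kn+1-B)!\,B!}=\frac{1}{kn+1}\binom{kn+1}{B}$, and the final Vandermonde collapse to $\binom{(k+1)n}{n}$ are all sound, and the boundary terms you add when extending the sum over $B$ do vanish. Note, however, that the paper itself does not prove this proposition at all: it simply cites Edelman \cite{Ede} ("essentially given by Edelman"), just as it states Corollary~\ref{PartexactK} as an easy consequence of Proposition~\ref{NC type} without details. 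So what you have produced is a genuinely self-contained derivation in the spirit of the paper's own treatment of the $k$-equal case, obtained by summing the Kreweras type formula over all $k$-divisible types rather than a single type; the price is the extra bookkeeping with the inner sum and Vandermonde, the gain is independence from the external reference. Your closing observation is also consistent with the paper: since $\binom{(k+1)n}{n}/(kn+1)=|NC_{k+1}(n)|$ by Corollary~\ref{PartexactK}, the statement follows alternatively from the bijection between $k$-divisible and $(k+1)$-equal non-crossing partitions mentioned in the introduction via \cite{Ar2}, which is the shortest route if one is willing to import that bijection.
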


Out of two non-crossing partitions $\pi_1,\pi_2\in NC(n)$ we can build the $2$-preserving partition $\pi_1\cup\pi_2$ by thinking $\pi_1\in NC(\{1,3,\dots ,2n-1\})$, $\pi_2\in NC(\{2,4,\dots ,2n\})$, and drawing them together. In general, the resulting partition may have crossings. Then for a given $\pi\in NC(n)\cong NC(\{1,3,\dots ,2n-1\})$ we define its \textit{Kreweras complement} \[Kr(\pi):=\max\{\sigma \in NC(n)\cong NC(\{2,4,\dots ,2n\}) :\pi\cup\sigma\in NC(2n)\}.\]

The Kreweras complement \cite{Kr} satisfies many nice properties. The map $Kr:NC(n)\to NC(n)$ is an order reversing isomorphism. Furthermore, for all $\pi \in NC(n)$ we have that $|\pi|+|Kr(\pi)|=n+1$.

Similar to Proposition \ref{NC type}, one can count the number of partitions $\pi$, such that $\pi$ and $Kr(\pi)$ have certain block structures. Let $(r_i)_{1\leq i\leq n}$, $(b_j)_{1\leq j\leq n}$ be tuples satisfying 
\begin{eqnarray}
1r_1+2r_2+\dots+nr_n=n=1b_1+2b_2+\dots+nb_n,\\
|\pi|+|Kr(\pi)|=r_1+\dots+r_n+b_1+\dots+b_n=n+1. \label{Kblocks}
\end{eqnarray}
Then the number of partitions such that $\pi$ has $r_i$ blocks of size $i$ and
 $Kr(\pi)$ has $b_j$ blocks of size $j$ is given by the formula 
\begin{equation}\label{typekr}
n\frac{(|\pi|-1)!(|Kr(\pi)|-1)!}{r_1!\cdots r_n!b_1!\cdots b_n!}. 
\end{equation}
When $\pi$ is $k$-equal, Equation (\ref{typekr}),  reduces to \[k\frac{((k-1)n)!}{b_1!\cdots b_n!}.\]

As a consequence, we can show that for large $k$, the Kreweras complements of $k$-equal partitions have ``typically'' small blocks.
More precisely, for $n,k\geq 1$ let \[NC(k,n)_{2,1}:=\{\pi\in NC_k(n):Kr(\pi) \text{ contains only pairings and singletons}\}\subseteq NC_k(n).\]
In this case, the only possibility is that $b_1=n(k-2)+2$, $b_2=n-1$ and $b_i=0$ for $i>2$. Hence
\begin{equation} \label{sizeH2}
|NC(k,n)_{2,1}|=k\frac{((k-1)n)!}{((n(k-2)+2)!(n-1)!}.
\end{equation}
 An easy
application of Stirling's approximation formula shows that 
 \begin{equation} \label {H2}
\lim_{k\rightarrow \infty}\frac{|NC(k,n)_{2,1}|}{|NC_k(n)|}=1.
\end{equation}

\subsection{Free Probability and Free Cumulants}

A $C^*$\textit{-probability space} is a pair $(\mathcal{A},\tau)$, where $\mathcal{A}$ is a unital $C^*$-algebra and $\tau:\mathcal{A}\to\mathbb{C}$ is a positive unital linear functional.

The \textit{free cumulant functionals} $(\kappa_n)_{n\geq1}$, $\kappa_n:\mathcal{A}^n\to \mathbb{C}$ are defined inductively and indirectly by the moment-cumulant formula:
\begin{equation} \label{momcumfmla}
\tau(a_1\cdots a_n)=\sum_{\pi\in NC(n)}\kappa_{\pi}(a_1,\dots,a_n),
\end{equation}
where, for a family of functionals $(f_n)_{n\geq 1}$ and a partition $\pi\in NC(n)$, $\pi=\{V_1,\dots,V_i\}$, $V_j=\{m^{(j)}_1,\dots ,m^{(j)}_{|V_j|}\}\subseteq [n]$, we define
\begin{equation}
f_{\pi}(a_1,\dots,a_n):=\prod_{j=1}^if_{|V_j|}(a_{m^{(j)}_1},\dots,a_{m^{(j)}_{|V_j|}}).
\end{equation}

If we put $\tau_n(a_1,\dots ,a_n):=\tau(a_1\cdots a_n)$, Formula \ref{momcumfmla} may be inverted as follows:

\begin{equation}
\kappa_n(a_1,\dots ,a_n)=\sum_{\pi \in NC(n)}\tau_\pi(a_1,\dots,a_n)Mob[\pi,1_n],
\end{equation}

where $Mob:NC(n)\times NC(n)\to \mathbb{C}$ is the M\"obius function on $NC(n)$. For a detailed exposition we refer to \cite[Lecture 11]{NiSp}.

The main property of the free cumulants is that they characterize free independence. Here we will actually employ this characterization as the definition for freeness (for the equivalence with the usual definition see e.g. \cite{NiSp}).

The variables $a_1,\dots ,a_k\in \mathcal {A}$ are \textit{free} if and only if we have, for all $n\in \mathbb{N}$ and $1\leq i_1,\dots,i_n\leq k$, that \[\kappa_n(a_{i_1},\dots,a_{i_n})=0,\] unless $i_1=i_2=\dots=i_n$.

  The main tool used to derive our formulae is the following, proved by Krawczyk and Speicher \cite{KrSp}.
\begin{proposition}[Formula for products as arguments]\label{products}
Let $(\mathcal{A},\tau)$ be a $C^*$-probability space and let $(\kappa_n)_{n\geq1}$ be the corresponding free cumulants. Let $m,n\geq 1$, $1\leq i(1)<i(2)\dots<i(m)=n$ be given and consider the partition \[\hat0_m=\{\{1,\dots,i(1)\},\dots,\{i(m-1)+1,\dots,i(m)\}\}\in NC(n).\] Consider random variables 
$a_1,\dots,a_n\in \mathcal{A}.$ Then the following equation holds:
\begin{equation}\label{fprod}
\kappa_m(a_1\cdots a_{i(1)},\dots ,a_{i(m-1)+1}\cdots a_{i(m)})=\sum_{\substack{\pi \in NC(n) \\ \pi \vee \hat0_m =1_{n}
}}\kappa_{\mathbf{\pi }}(a_1,\dots,a_n).
\end{equation}
\end{proposition}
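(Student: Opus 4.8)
The plan is to deduce the formula from the ordinary moment-cumulant relation (\ref{momcumfmla}) by a single M\"obius inversion carried out on $NC(m)$. Write $I_j:=\{i(j-1)+1,\dots,i(j)\}$ (with $i(0)=0$) for the $j$-th interval, so that $\hat0_m=\{I_1,\dots,I_m\}$ and $A_j:=a_{i(j-1)+1}\cdots a_{i(j)}$, with $A_1\cdots A_m=a_1\cdots a_n$. I would first recall the standard order embedding $NC(m)\hookrightarrow NC(n)$, $\rho\mapsto\hat\rho$, where $\hat\rho$ is obtained by replacing each index $j$ occurring in a block of $\rho$ by the entire interval $I_j$; its image is exactly the interval $[\hat0_m,1_n]$, and the map is a lattice isomorphism $NC(m)\cong[\hat0_m,1_n]$. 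In particular $\hat1_m=1_n$, so the proposition is the special case $\omega=1_m$ of the more symmetric identity
\begin{equation*}
\kappa_\omega(A_1,\dots,A_m)=\sum_{\substack{\pi\in NC(n)\\ \pi\vee\hat0_m=\hat\omega}}\kappa_\pi(a_1,\dots,a_n),\qquad \omega\in NC(m). \tag{$\star\star$}
\end{equation*}

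The core is a purely multiplicative identity at the level of moments:
\begin{equation*}
\tau_\omega(A_1,\dots,A_m)=\sum_{\substack{\pi\in NC(n)\\ \pi\preceq\hat\omega}}\kappa_\pi(a_1,\dots,a_n),\qquad \omega\in NC(m). \tag{$\star$}
\end{equation*}
To prove $(\star)$ I would use that both sides are multiplicative over the blocks of $\hat\omega$: the left side factors as $\prod_{W\in\omega}\tau\bigl(\prod_{j\in W}A_j\bigr)$ by definition of $\tau_\omega$, while the right side factors because any $\pi\preceq\hat\omega$ is the disjoint union of its restrictions to the blocks of $\hat\omega$ and $\kappa_\pi$ is multiplicative. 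It therefore suffices to treat a single block $W=\{j_1<\dots<j_p\}$, i.e.\ to verify $\tau(A_{j_1}\cdots A_{j_p})=\sum_{\pi\in NC(\hat W)}\kappa_\pi$ with $\hat W=I_{j_1}\cup\dots\cup I_{j_p}$. This is precisely the moment-cumulant relation (\ref{momcumfmla}) applied to the product of the $a_i$, $i\in\hat W$, and the only point to check is that the order in which these $a_i$ appear in the product $A_{j_1}\cdots A_{j_p}$ agrees with the order induced from $[n]$; this holds because $j_1<\dots<j_p$ forces $I_{j_1}<\dots<I_{j_p}$ as consecutive intervals of $[n]$.

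It then remains to pass from the cumulative identity $(\star)$ to the exact identity $(\star\star)$. Since $\hat0_m\preceq\hat\omega$, the condition $\pi\preceq\hat\omega$ is equivalent to $\pi\vee\hat0_m\preceq\hat\omega$, so $(\star)$ reads $\tau_\omega(A)=\sum_{\pi\vee\hat0_m\preceq\hat\omega}\kappa_\pi$. Applying the moment-cumulant relation to $A_1,\dots,A_m$ gives $\tau_\omega(A)=\sum_{\rho\preceq\omega}\kappa_\rho(A)$, so M\"obius inversion on $NC(m)$ yields $\kappa_\omega(A)=\sum_{\rho\preceq\omega}Mob[\rho,\omega]\,\tau_\rho(A)$. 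Substituting $(\star)$ and interchanging the summations gives
\[
\kappa_\omega(A)=\sum_{\pi\in NC(n)}\kappa_\pi\sum_{\substack{\rho\preceq\omega\\ \hat\rho\succeq\pi\vee\hat0_m}}Mob[\rho,\omega].
\]
Writing $\pi\vee\hat0_m=\hat\nu$ for the unique $\nu\in NC(m)$ corresponding under the isomorphism, the inner condition becomes $\nu\preceq\rho\preceq\omega$, and the defining property of the M\"obius function collapses the inner sum to $\delta_{\nu,\omega}$. Hence only the $\pi$ with $\pi\vee\hat0_m=\hat\omega$ survive, proving $(\star\star)$, and the choice $\omega=1_m$ (so $\hat\omega=1_n$ and $\kappa_{1_m}=\kappa_m$) gives the proposition.

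I expect the main obstacle to be the bookkeeping around the lattice isomorphism $NC(m)\cong[\hat0_m,1_n]$: one must check that $\rho\mapsto\hat\rho$ is order preserving with image exactly $[\hat0_m,1_n]$, that $\pi\vee\hat0_m$ always lands in this interval (so the associated $\nu\in NC(m)$ is well defined), and that $\hat\rho\succeq\pi\vee\hat0_m$ translates faithfully into $\nu\preceq\rho$. Once these order-theoretic facts are secured, the moment lemma $(\star)$ is routine and the M\"obius cancellation is automatic.
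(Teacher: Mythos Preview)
Your argument is correct and is essentially the standard proof via M\"obius inversion on $NC(m)$, using the lattice isomorphism $NC(m)\cong[\hat0_m,1_n]$; the bookkeeping you flag as a potential obstacle is routine and well documented (see, e.g., \cite[Lecture~11]{NiSp}). Note, however, that the paper does not prove this proposition at all: it is quoted as a known tool from Krawczyk and Speicher \cite{KrSp}, so there is no ``paper's own proof'' to compare against. Your write-up supplies a complete proof where the paper simply cites one.
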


It is well known that for a self-adjoint element $a\in\mathcal{A}$ there exist a unique compactly supported measure $\mu_a$ (its distribution) with the same moments as $a$, that is, \[\int_{\mathbb{R}}x^{k}\mu_a (dx)=\tau (a^{k}), \quad \forall k\in \mathbb{N}.\] In particular, if $a\in\mathcal{A}$ is positive (i.e. $a=bb^*$ for some $b\in\mathcal{A}$), the measure $\mu_a$ is supported in the positive half-line. Furthermore, given compactly supported probability measures $\mu_1,\dots,\mu_k$ we can find free elements $a_1,\dots,a_k$ in some $C^*$-probability space such that $\mu_{a_i}=\mu_i$.

For compactly supported measures on the positive half-line $\mu_1,\dots,\mu_k$, we can then consider positive elements $a_1,\dots,a_k\in \mathcal{A}$ such that $\mu_i=\mu_{a_i}$. Positive elements have unique positive square roots, so let $b_1,\dots, b_k$ be the (necessarily free) positive elements such that $b_i^2=a_i$. 

Then, on one hand, the \emph{free multiplicative convolution} is defined as the distribution $\mu_1\boxtimes\cdots\boxtimes \mu_k$ of the positive element $b:=b_1b_2\cdots b_kb_k\cdots b_1$. It is not hard to see that for tracial $C^*$-probability spaces, the moments of $b$ are exactly the moments of $a:=a_1\cdots a_k$. It will be convenient for us to work with $a$ instead of $b$.

On the other hand, the \emph{free additive convolution} is defined as the distribution $\mu_1\boxplus \cdots \boxplus \mu_k$ of the positive element $a_1+a_2+\cdots+a_k$. It is easy to see that $\kappa_n(\mu_1\boxplus \cdots \boxplus \mu_k)=\kappa_n(\mu_1)+\cdots +\kappa_n(\mu_k)$.

 Finally, for a measure $\mu$ and $c>0$ we denote by $D_{c}(\mu)$ the measure such that $D_c(\mu)(B)=\mu(\frac{1}{c}B)$ for all Borel sets $B$. In this case, $\kappa_n(D_c(\mu))=c^n\kappa_n(\mu)$.

\section{Main Formulas}

The following characterization plays a central role in this work. The proof, elementary but cumbersome, can be found in the Appendix.

\begin{proposition} \label{Prop}
i) $\pi\in NC(kn)$ is $k$-preserving if and only if $\pi=Kr(\sigma)$ for some $k$-divisible partition $\sigma\in NC^k(n)$.

ii) $\pi\in NC(kn)$ is $k$-completing if and only if $\pi=Kr(\sigma)$ for some $k$-equal partition $\sigma\in NC_k(n)$.
\end{proposition}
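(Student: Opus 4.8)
\textbf{Proof proposal for Proposition \ref{Prop}.}

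The plan is to prove both statements by combining the key structural identity $Kr(\rho_k^n) = 0_{kn}$-type considerations with the block-counting constraint $|\pi| + |Kr(\pi)| = kn+1$, and to establish each biconditional by proving the two inclusions separately. For part (i), the forward direction ``$\pi = Kr(\sigma)$ with $\sigma$ $k$-divisible $\Rightarrow$ $\pi$ is $k$-preserving'' is the substantive claim; I would argue it via the circular (annular) picture in which $\sigma$ is drawn on the vertices $\{1,\dots,kn\}$ and $Kr(\sigma)$ on the interstitial vertices. The elements of a block of $Kr(\sigma)$ sitting between consecutive $\sigma$-vertices are forced, by maximality of the Kreweras complement, to group exactly according to the ``gaps'' left by the blocks of $\sigma$; since every block of $\sigma$ has size divisible by $k$, one checks that walking around the circle the interstitial vertices that get connected always differ by a multiple of $k$, i.e. lie in the same residue class mod $k$. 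The cleanest way to make this rigorous is induction on $n$ (or on the number of blocks of $\sigma$), peeling off an innermost block of $\sigma$ — which, being $k$-divisible, spans an arc whose endpoints are congruent mod $k$ — and using the recursive structure of $Kr$ on the complementary arcs.

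For the reverse inclusion of (i), ``$\pi$ $k$-preserving $\Rightarrow$ $\pi = Kr(\sigma)$ for some $k$-divisible $\sigma$'': since $Kr$ is a bijection on $NC(kn)$, it suffices to show that $\sigma := Kr^{-1}(\pi) = Kr(\pi)$ (up to the identification of the two vertex sets; more precisely $Kr^{-1}$) is $k$-divisible whenever $\pi$ is $k$-preserving. Here I would again use the annular drawing: a $k$-preserving $\pi$ forces each block of its Kreweras complement to absorb full ``runs'' of $k$ consecutive interstitial vertices at a time, because crossing a residue class of $\pi$ costs a multiple of $k$ steps. This is really the same geometric fact read in the opposite direction, so the induction set up for the forward direction should yield this simultaneously.

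For part (ii), I would \emph{reduce to part (i)}: a $k$-equal partition is precisely a $k$-divisible partition all of whose blocks have the minimal possible size $k$, and dually one expects a $k$-completing partition to be precisely a $k$-preserving partition that is ``as coarse as possible'' in the sense of having the maximal number of blocks. Concretely, if $\sigma \in NC_k(n)$ then $\sigma$ has exactly $n$ blocks, so $|Kr(\sigma)| = kn+1-n = (k-1)n+1$; one then checks, using Proposition \ref{Prop}(i), that a $k$-preserving partition $\pi$ has $|\pi| = (k-1)n+1$ if and only if $Kr^{-1}(\pi)$ is $k$-equal, and that $|\pi| = (k-1)n+1$ together with $k$-preservation is equivalent to the condition $\pi \vee \rho_k^n = 1_{kn}$, i.e. to $\pi$ being $k$-completing. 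The join condition is exactly what detects that $\pi$ meets every one of the $n$ consecutive $k$-blocks of $\rho_k^n$ and connects them all; an elementary count shows a $k$-preserving partition achieving this must connect the residue classes with the fewest possible blocks, which back-translates under $Kr$ to all blocks of $\sigma$ having size exactly $k$.

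The main obstacle will be the forward direction of part (i) — turning the intuitive ``the Kreweras complement of a $k$-divisible partition only connects vertices in the same residue class'' into a clean induction. The bookkeeping of which interstitial vertices lie in which residue class as one recursively removes nested arcs is exactly the ``elementary but cumbersome'' computation the authors have deferred to the Appendix; everything else (the block-count identity $|\pi|+|Kr(\pi)| = kn+1$, the bijectivity and order-reversing property of $Kr$, and the reduction of (ii) to (i)) is then essentially formal.
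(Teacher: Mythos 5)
There is a genuine gap: the substantive content of Proposition \ref{Prop} is part i), and your proposal does not actually prove it. For the forward direction your ``gap'' heuristic can in fact be made rigorous in one step, with no induction at all: if $\bar\imath\sim_{Kr(\sigma)}\bar\jmath$ (gaps after $i$ and after $j$), then no block of $\sigma$ may meet both $\{i+1,\dots,j\}$ and its complement, so $\{i+1,\dots,j\}$ is a union of blocks of $\sigma$, whence $j-i\equiv 0 \pmod k$ by $k$-divisibility. But you do not state this argument; instead you announce an induction ``peeling off an innermost block'' and then defer it as ``the main obstacle''. Worse, for the converse direction ($\pi$ $k$-preserving $\Rightarrow Kr^{-1}(\pi)$ $k$-divisible) the picture you invoke is wrong as stated: blocks of the $k$-divisible complement are in general \emph{not} ``runs of $k$ consecutive interstitial vertices'' (see the block $\{2,6,7\}$ in the paper's second figure), and ``the same geometric fact read in the opposite direction'' is not an argument. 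This converse is exactly where the paper's Appendix does real work: it introduces the operations $I_r^k$ (duplicate a leg, insert $k-1$ singletons) and $\tilde I_r^k$ (insert an interval block), the intertwining identity $Kr(I_r^k(\pi))=\tilde I_r^k(Kr(\pi))$ of Equation (\ref{II}), Lemma \ref{lema1} (these operations preserve $k$-preservation, $k$-completion, $k$-divisibility, $k$-equality) and Lemma \ref{lema2} (every partition in the relevant class is generated from $0_{q_0}$, resp.\ $1_{q_0}$, by iterating them). Your plan contains no substitute for this mechanism, so both directions of i) remain unproven in your write-up.

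Your reduction of ii) to i) is a genuinely different route from the paper's (which handles ii) by the same insertion machinery), and half of it is sound: from the general inequality $|\pi|+|\rho_k^n|\le kn+|\pi\vee\rho_k^n|$, the join condition forces $|\pi|\le(k-1)n+1$, while i) forces $|\pi|\ge(k-1)n+1$, so a $k$-completing $\pi$ is $Kr(\sigma)$ with $\sigma$ $k$-equal. However, the other direction of your claimed equivalence --- that a $k$-preserving $\pi$ with exactly $(k-1)n+1$ blocks (equivalently, $\sigma$ $k$-equal) automatically satisfies $\pi\vee\rho_k^n=1_{kn}$ --- does \emph{not} follow from block counting alone, and your ``elementary count'' sentence only justifies the first direction. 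It can be repaired: if $\tau:=\pi\vee\rho_k^n$ had $t\ge 2$ blocks, each block of $\tau$ is a union of, say, $m_i$ blocks of $\rho_k^n$, the restriction of $\pi$ to it is again $k$-preserving, so by i) it has at least $(k-1)m_i+1$ blocks, and summing gives $|\pi|\ge(k-1)n+t>(k-1)n+1$, a contradiction. Also note a slip: $(k-1)n+1$ is the \emph{minimal}, not maximal, number of blocks of a $k$-preserving partition. In sum, the outline is plausible and the counting reduction for ii) is an attractive alternative, but as written the core of i) (especially its converse) and one direction of ii) are asserted rather than proved.
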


\begin{remark} \label{decom}
In view of the previous characterization, for a $k$-divisible partition $\pi$, the Kreweras complement $Kr(\pi)$ may be divided into $k$ partitions $\pi_1,\pi_2,\dots,\pi_k$, with  $\pi_j$ involving only numbers congruent to $j$ mod $k$. In this case we will write $\pi_1\cup\dots\cup\pi_k=Kr(\pi)$ for such decomposition.
\end{remark}

\begin{figure}
  \begin{center}
    \includegraphics[scale=0.4]{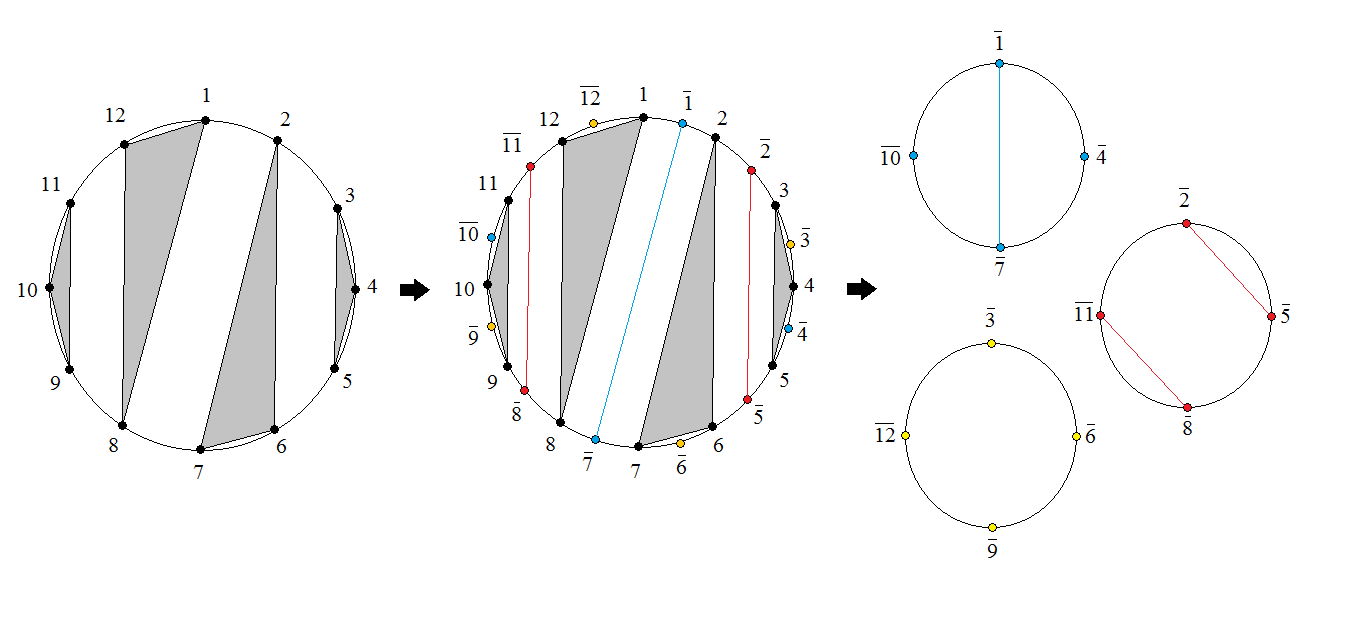}
    \caption{The $3$-equal partition $\pi=\{ \{1,8,12\},\{2,6,7\},\{3,4,5\},\{9,10,11\}\}$  and its Kreweras complement $Kr(\pi)=\pi_1\cup\pi_2\cup\pi_3$, 
with $\pi_1=\{\{1,7\},\{4\},\{10\}\}$, $\pi_2=\{\{2,5\},\{8,11\}\}$ and $\pi_3=\{\{3\},\{6\},\{9\},\{12\}\}$.}
  \end{center}
\end{figure}

We are ready to prove our main Theorem.

\begin{proof}[Proof of Theorem  \ref{main}]
By the formula for products as arguments, we have that \[\kappa_n(a)=\prod_{\substack {\pi \in NC(kn) \\ \pi\vee\rho_n^k=1_{nk}}}\kappa_{\pi}(a_1,\dots ,a_n).\] Since the random variables are free, the sum runs actually over $k$-preserving partitions (otherwise there would be a mixed cumulant). But then by Proposition \ref{Prop} ii), the partitions involved in the sum are exactly the Kreweras complements of $k$-equal partitions, and the formula follows.

For the proof of (\ref{kmom2}), we use the moment-cumulant formula
\[\tau(a^n)=\sum_{\pi\in NC(kn)}\kappa_{\pi}(a_1,\dots ,a_n).\]
Again, the elements involved are free, so only $k$-preserving partitions matter, and these are the Kreweras complements of $k$-divisible partitions by Proposition \ref{Prop} i). Hence the result follows.
\end{proof}

\begin{remark} \label{Boolean}
As pointed out in \cite{BeNi}, Equation (\ref{2prod}) is also satisfied when we replace the free cumulants by the Boolean cumulants. Therefore, Formula (\ref{kcum2}) holds as well for Boolean cumulants $(b_n)_{n\geq 1}$, namely, if $a:=a_1\cdots a_k$ is a product of free random variables, we have
\begin{equation}
b_n(a)=\sum_{\pi \in NC_{k}(n)}b_{Kr(\pi)}(a_1,\dots ,a_k).
\end{equation}

\end{remark}

\begin{corollary} Let $\mu_1,\dots,\mu_k$  be probability measures with positive bounded support and let $\mu=\mu_1 \boxtimes \cdots \boxtimes \mu_k$. We may rewrite our formulas,
\begin{eqnarray}
\kappa_n(\mu)&=&\sum_{\pi \in NC_k(n)}\kappa_{\pi_1}(\mu_1)\dots \kappa_{\pi_k}(\mu_k) \label{kcum},\\
m_n(\mu)&=&\sum_{\pi \in NC^k(n)}\kappa_{\pi_1}(\mu_1)\dots \kappa_{\pi_k}(\mu_k) \label{kmom},
\end{eqnarray}
where $\pi_1\cup\dots\cup\pi_k=Kr(\pi)$ is the decomposition described in  Remark \ref{decom}.
\end{corollary}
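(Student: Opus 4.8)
The plan is to derive the Corollary as a direct consequence of Theorem~\ref{main}, with the only real content being the bookkeeping of how the multilinear cumulant $\kappa_{Kr(\pi)}(a_1,\dots,a_k)$ factors once we pass to scalar-valued cumulants of the individual measures. First I would fix free positive elements $a_1,\dots,a_k\in(\mathcal{A},\tau)$ with $\mu_{a_i}=\mu_i$, so that $\mu=\mu_1\boxtimes\cdots\boxtimes\mu_k$ is the distribution of $a=a_1\cdots a_k$ (using the tracial identification of moments of $b_1\cdots b_k b_k\cdots b_1$ with those of $a$, as recalled in the preliminaries). Then $\kappa_n(\mu)=\kappa_n(a)$ and $m_n(\mu)=\tau(a^n)$, and Theorem~\ref{main} gives
\[
\kappa_n(\mu)=\sum_{\pi\in NC_k(n)}\kappa_{Kr(\pi)}(a_1,\dots,a_k),\qquad
m_n(\mu)=\sum_{\pi\in NC^k(n)}\kappa_{Kr(\pi)}(a_1,\dots,a_k),
\]
where in each summand the arguments of $a$ are spelled out in the pattern $(a_1,a_2,\dots,a_k,a_1,\dots,a_k,\dots)$ of length $kn$.

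The key step is to unpack $\kappa_{Kr(\pi)}(a_1,\dots,a_k)$ using Remark~\ref{decom}. By Proposition~\ref{Prop}, $Kr(\pi)$ is $k$-preserving (when $\pi$ is $k$-divisible) or $k$-completing (when $\pi$ is $k$-equal), so every block of $Kr(\pi)$ consists of indices lying in a single residue class $j$ modulo $k$; writing $Kr(\pi)=\pi_1\cup\dots\cup\pi_k$ for the induced decomposition, the definition of $\kappa_{Kr(\pi)}$ as a product over blocks factors along the residue classes, giving
\[
\kappa_{Kr(\pi)}(a_1,\dots,a_k)=\prod_{j=1}^{k}\kappa_{\pi_j}\big(\underbrace{a_j,\dots,a_j}_{}\big).
\]
Here I use that a position $i\in[kn]$ in the product $a^n$ carries the variable $a_j$ with $j\equiv i \pmod k$, so every block of $\pi_j$ has all arguments equal to $a_j$; thus $\kappa_{\pi_j}(a_j,\dots,a_j)$ is exactly the partitioned cumulant of the single variable $a_j$, i.e. a product of $\kappa_{|V|}(a_j)=\kappa_{|V|}(\mu_j)$ over blocks $V\in\pi_j$. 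Reindexing $\pi_j$ as a non-crossing partition of $[\,|\pi_j|$-many points$]$ — or more precisely just reading it as an abstract non-crossing partition via the canonical order-isomorphism of its support with an initial segment — lets me write $\kappa_{\pi_j}(a_j,\dots,a_j)=\kappa_{\pi_j}(\mu_j)$ in the notation of the statement. Substituting this back into the two displays from Theorem~\ref{main} yields (\ref{kcum}) and (\ref{kmom}) verbatim.

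The main (and essentially only) obstacle is a notational one: making precise that $\kappa_{\pi_j}(\mu_j)$ means the partitioned free cumulant functional of $\mu_j$ evaluated along $\pi_j$, and checking that the factorization over residue classes is legitimate — this is immediate from the multiplicativity of $f_\pi$ over blocks in the definition $f_\pi=\prod_V f_{|V|}$ together with the $k$-preserving property, so no genuine difficulty arises. I would also remark that the positivity and boundedness of the supports are needed only to guarantee the existence of the $b_i$ and hence the well-definedness of $\boxtimes$; the cumulant identities themselves are purely combinatorial consequences of Theorem~\ref{main}. This completes the proof of the Corollary.
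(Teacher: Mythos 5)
Your proposal is correct and follows exactly the route the paper intends: the Corollary is an immediate consequence of Theorem \ref{main} together with Remark \ref{decom}, and your bookkeeping — identifying $\mu$ with the distribution of $a=a_1\cdots a_k$ via traciality, noting that position $i$ in $a^n$ carries $a_j$ with $j\equiv i \pmod k$, and factoring $\kappa_{Kr(\pi)}(a_1,\dots,a_k)=\prod_j \kappa_{\pi_j}(\mu_j)$ over the residue-class decomposition of the $k$-preserving partition $Kr(\pi)$ — is precisely the omitted verification. No gaps; the paper simply leaves these details to the reader.
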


\begin{remark} \label{variance}
From Equation (\ref{kmom}) it is easy to see that for compactly supported measures with mean $1$, the variance is additive with respect to free multiplicative convolution, that is \[Var(\mu_1\boxtimes \dots \boxtimes \mu_k)=\kappa_2(\mu_1\boxtimes \dots \boxtimes \mu_k)=\sum_{i=1}^k\kappa_2(\mu_i)=\sum_{i=1}^kVar(\mu_i).\]
\end{remark}

\section{Supports of free multiplicative convolutions}

Our main result can be used to compute bounds for the supports of free multiplicative convolutions of positive measures. Our results in this section generalize those by Kargin in \cite{Kar}, where the case $\mu_1=\dots=\mu_k$ was treated.

We recall that the cardinality of $NC(n)$ is given by the Catalan number $C_n$, which is easily bounded by $4^n$. It is also known that the M\"{o}bius function $Mob:NC(n)\times NC(n)\to \mathbb{C}$ is bounded in absolute value by $C_{n-1}\leq 4^{n-1}$ (see \cite[Prop 13.15]{NiSp}). Thus we are able to control the size of the free cumulants.

\begin{lemma} \label{Lbound}
Let $\mu$ be a probability measure supported on $[0,L]$ with variance $\sigma^2$, such that $E(\mu)=1$. Then $\kappa_2(\mu)=\sigma^2\leq L-1$ and $|\kappa_n(\mu)|<(26L)^{n-1}.$
\end{lemma}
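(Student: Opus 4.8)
The plan is to bound $|\kappa_n(\mu)|$ via the Möbius inversion formula $\kappa_n(\mu)=\sum_{\pi\in NC(n)}m_\pi(\mu)\,Mob[\pi,1_n]$, where $m_\pi$ is the product of moments of $\mu$ over the blocks of $\pi$. First I would establish the elementary estimate $\kappa_2(\mu)=\sigma^2\leq L-1$: since $\mu$ is supported on $[0,L]$ and has mean $1$, we have $\int x^2\,d\mu(x)\leq L\int x\,d\mu(x)=L$, so $\sigma^2=\int x^2\,d\mu - 1\leq L-1$. Next, I need a bound on the individual moments: for $j\geq 1$, $m_j(\mu)=\int x^j\,d\mu(x)\leq L^{j-1}\int x\,d\mu(x)=L^{j-1}$ (using $x\leq L$ on the support and mean $1$); also $m_0=1$ trivially if empty blocks occurred, but blocks are non-empty so the relevant bound is $m_j\leq L^{j-1}$ for $j\geq 1$, and in particular $m_1=1$.

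The key step is then to combine these with the cardinality and Möbius bounds already recalled in the excerpt: $|NC(n)|=C_n\leq 4^n$ and $|Mob[\pi,1_n]|\leq C_{n-1}\leq 4^{n-1}$. For a partition $\pi\in NC(n)$ with blocks of sizes $j_1,\dots,j_r$ (so $j_1+\cdots+j_r=n$), we get $|m_\pi(\mu)|\leq L^{(j_1-1)+\cdots+(j_r-1)}=L^{n-r}\leq L^{n-1}$ since $r\geq 1$. Therefore
\[
|\kappa_n(\mu)|\leq \sum_{\pi\in NC(n)}|m_\pi(\mu)|\,|Mob[\pi,1_n]|\leq 4^n\cdot L^{n-1}\cdot 4^{n-1}=16^{n-1}\cdot 4\cdot L^{n-1}.
\]
This gives roughly $|\kappa_n(\mu)|\leq 4\cdot(16L)^{n-1}$, which is already of the right form but with a stray factor $4$ and constant $16$ rather than $26$. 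To absorb the factor $4$ into the base and reach the clean bound $(26L)^{n-1}$, I would refine the moment estimate slightly: using that $m_1(\mu)=1$ exactly, a partition with $r$ singletons among its $r$ blocks actually satisfies $|m_\pi(\mu)|\leq L^{n-r}$, and one can check that the worst case still only involves $L^{n-1}$; the factor $4$ is then absorbed since $4\cdot 16^{n-1}\leq 26^{n-1}$ fails for $n=1$ but for $n=1$ we have $\kappa_1=1=(26L)^0$ trivially, and for $n\geq 2$, $4\cdot 16^{n-1}< 26^{n-1}$ iff $4<(26/16)^{n-1}$, which holds once $n-1\geq \log 4/\log(1.625)\approx 2.86$, i.e. $n\geq 4$; the remaining cases $n=2,3$ must be handled by noting $\kappa_2=\sigma^2\leq L-1\leq L<26L$ and a direct check for $n=3$.

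\textbf{Main obstacle.} The substantive content is entirely in the crude-but-honest bookkeeping; the genuine difficulty is cosmetic — squeezing the naive constant $16$ (times a leftover factor) down to the advertised $26$ uniformly in $n$. I expect the cleanest route is \emph{not} to optimize the Möbius/Catalan bounds but simply to note that $4\cdot 16^{n-1}\leq 26^{n-1}$ holds for all $n\geq 4$ while the small cases $n=1,2,3$ follow from the explicit first three cumulants together with $\sigma^2\leq L-1$ and the moment bounds $m_2\leq L$, $m_3\leq L^2$; a slightly sharper treatment of which blocks can be singletons (where the moment is exactly $1$, contributing no factor of $L$) removes any remaining slack. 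No deep idea is needed beyond what is already assembled in the Preliminaries.
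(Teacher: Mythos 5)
Your proposal is correct and follows essentially the same route as the paper: the moment bound $m_j(\mu)\leq L^{j-1}$, Möbius inversion with $|NC(n)|\leq 4^n$ and $|Mob[\pi,1_n]|\leq 4^{n-1}$ giving $4\cdot(16L)^{n-1}$, and the observation that $4\cdot 16^{n-1}<26^{n-1}$ for $n\geq 4$ (the paper's ``$4^7<26^3$''), with $n=1,2,3$ checked directly. The only difference is presentational: you leave the $n=3$ case as ``a direct check,'' which the paper carries out explicitly via $|\kappa_3(\mu)|\leq L^2+3L+1<26^2L^2$.
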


\begin{proof}

It is easy to see that $L\geq 1$ and therefore \[m_n^{\mu}=\int_0^L x^n \mathrm{d}\mu(x)\leq\int_0^L L^{n-1}x \mathrm{d}\mu(x)=L^{n-1}.\] Then we have 
that $\kappa_1(\mu)=1$, $0<\kappa_2(\mu)\leq L-1<26L$, and $|\kappa_3(\mu)|\leq L^2+3L+1<26^2L^2$, and for $n\geq4$ we have
\[|\kappa_n(\mu)|=\sum_{\pi\in NC(n)}|m_{\pi}^{\mu}||Mob[\pi,1_n]|\leq \sum_{\pi\in NC(n)}L^{n-1}4^{n-1}\leq 4^{2n-1}L^{n-1}<(26L)^{n-1},\] since $4^7<26^3.$
\end{proof}

We easily see that the growth of the support is no less than linear. 
\begin{proposition} \label{lowbound}
Let $\mu_1,\dots ,\mu_k$ be compactly supported probability measures on $\mathbb{R}^+$, satisfying $E(\mu_i)=1$, $Var(\mu_i)= \sigma^2$, and
let $L_k$ be the supremum of the support of $\mu:=\mu_1\boxtimes \dots \boxtimes \mu_k$. Then $L_k\geq k\sigma^2+1$.
\end{proposition}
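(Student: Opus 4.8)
The plan is to get a lower bound on $L_k$ by exhibiting a single moment $m_n(\mu)$ that grows at least like $(k\sigma^2)^n$ up to lower-order corrections, and then use the fact that $L_k \geq \limsup_n m_n(\mu)^{1/n}$ (since $m_n(\mu) = \int_0^{L_k} x^n\, d\mu \leq L_k^n$). Actually, to get the sharp additive constant $+1$ it is cleaner to argue directly: I would show that $m_n(\mu) \geq (L_k)^{n-1} \cdot m_1(\mu)$ fails in the wrong direction, so instead I will compare $m_n(\mu)$ against the trivial upper bound $m_n(\mu) \le L_k \cdot m_{n-1}(\mu)$ iterated, or more simply use convexity to write $m_n(\mu) \le L_k^{n-1} m_1(\mu) = L_k^{n-1}$, hence $L_k \ge m_n(\mu)^{1/(n-1)}$ for every $n \ge 2$.

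The key computational input is Equation (\ref{kmom}) from the Corollary: $m_n(\mu) = \sum_{\pi \in NC^k(n)} \kappa_{\pi_1}(\mu_1)\cdots\kappa_{\pi_k}(\mu_k)$, where $\pi_1 \cup \cdots \cup \pi_k = Kr(\pi)$. I would isolate the contribution of a cleverly chosen sub-family of $k$-divisible partitions on which all the relevant cumulant products are nonnegative and easy to evaluate — namely partitions $\pi \in NC^k(n)$ whose Kreweras complement consists entirely of singletons and pairings, with the pairings all lying in a single residue class. On such partitions the product $\kappa_{\pi_1}(\mu_1)\cdots\kappa_{\pi_k}(\mu_k)$ equals $\kappa_2(\mu_i)^{(\text{number of pairs})} = (\sigma^2)^{(\text{number of pairs})}$ times a product of $\kappa_1(\mu_j) = 1$'s, hence is manifestly positive. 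Since by Lemma \ref{Lbound} we have $\kappa_1 = 1 > 0$ and $\kappa_2 = \sigma^2 > 0$, and since the remaining terms in (\ref{kmom}) are not all of one sign, the cleanest route is to restrict to a family where only $\kappa_1$ and $\kappa_2$ appear. Concretely, I expect the extremal-for-this-bound configuration to be: take $n-1$ of the $k$ copies to contribute a single pair each (in distinct residue classes), producing a factor $(\sigma^2)^{?}$ — I would count carefully, but the upshot should be that one can produce a term equal to $k\sigma^2 \cdot (\text{lower order})$ inside $m_2(\mu)$, or more generally build $m_n(\mu) \geq (k\sigma^2 + 1)^{n-1}$ by a product/recursive structure.

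Actually the slickest version: apply (\ref{kmom}) or rather Remark \ref{variance} together with the observation that $m_n(\mu) \geq$ (the term of $NC^k(n)$ corresponding to $\pi = \rho_k^n$, whose Kreweras complement is the partition with all singletons) plus the sum of terms where exactly one block gets ``merged.'' The all-singleton term gives $\kappa_1^{kn} = 1$. The terms where $Kr(\pi)$ has exactly one pair and the rest singletons contribute $\kappa_2$ of one measure, and there are enough such $\pi$ (one for essentially each of the $k$ residue classes and each of $n$ adjacent positions, with positivity throughout) to sum to something $\geq kn\sigma^2$ — but for the bound $L_k \geq k\sigma^2 + 1$ I only need, via $L_k \geq m_n(\mu)^{1/(n-1)}$ and letting $n \to \infty$, that $m_n(\mu) \gtrsim (k\sigma^2+1)^{n}$. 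This is most naturally obtained by noting $m_n(\mu_1 \boxtimes\cdots\boxtimes\mu_k) \geq m_n(\nu_1 \boxtimes \cdots \boxtimes \nu_k)$ is hard to compare directly, so instead I would use the cumulant formula to show $\kappa_n(\mu) \ge \binom{\cdot}{\cdot}(\sigma^2)^{\cdot} \ge 0$ for the leading pieces and reconstruct moments.

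The main obstacle I anticipate is the sign issue: the sum in (\ref{kmom}) runs over all of $NC^k(n)$ and the higher cumulants $\kappa_j$ with $j \geq 3$ need not be positive, so one cannot simply lower-bound $m_n(\mu)$ by truncating the sum — one must identify a subfamily of partitions on which the summand is guaranteed nonnegative (this is exactly the family using only $\kappa_1$ and $\kappa_2$, which are positive by Lemma \ref{Lbound}) and show its total contribution already exceeds $(k\sigma^2+1)^{n-1}$. Carrying out that enumeration — matching $k$-divisible partitions $\pi$ with the combinatorial data ``which residue classes of $Kr(\pi)$ carry a pair, and where'' — is the crux, and I expect it to mirror the counting in Section 2 (Equation (\ref{typekr}) and the discussion of $NC(k,n)_{2,1}$). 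Once the count shows the nonnegative part of $m_n(\mu)$ is at least $(k\sigma^2+1)^{n-1}$ (for instance by recognizing it as the $n$-th moment of a rank-one-type perturbation, or directly by a binomial estimate), the inequality $L_k \geq m_n(\mu)^{1/(n-1)} \geq k\sigma^2 + 1$ follows immediately.
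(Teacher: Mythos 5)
There is a real gap: the step you yourself call ``the crux'' --- showing that the subfamily of $\pi\in NC^k(n)$ whose Kreweras complements contain only singletons and pairings contributes at least $(k\sigma^2+1)^{n-1}$ to $m_n(\mu)$ --- is never carried out. Everything in your outline after the choice of that subfamily is hedged (``I would count carefully, but the upshot should be\dots'', ``I expect it to mirror the counting in Section 2''), and the proposed enumeration is not a routine fill-in: the partitions in question are not those of $NC(k,n)_{2,1}$ (which sits inside $NC_k(n)$, not $NC^k(n)$), so Equation (\ref{sizeH2}) does not apply directly, and you would have to redo the count of $k$-divisible partitions by the pair/singleton structure of their complements and then verify the binomial-type lower bound. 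As written, the proposal identifies a plausible strategy but does not prove the inequality. Your concern about signs of higher cumulants is legitimate and is precisely why a truncation of (\ref{kmom}) needs this unproven positivity-plus-counting input for general $n$.

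The irony is that the correct and complete argument is already contained in the special case $n=2$ that you mention in passing and then abandon. Since $E(\mu)=1$, Remark \ref{variance} gives $\kappa_2(\mu)=\mathrm{Var}(\mu)=k\sigma^2$ exactly (no partition enumeration, no sign issues), so $m_2(\mu)=1+k\sigma^2$; and your own observation $m_n(\mu)\le L_k^{\,n-1}m_1(\mu)$ at $n=2$ gives $L_k\ge m_2(\mu)=k\sigma^2+1$. This is essentially the paper's proof, which phrases the second step as the bound $\kappa_2(\mu)\le L_k-1$ from Lemma \ref{Lbound} applied to $\mu$ on $[0,L_k]$. So the fix is not to complete your general-$n$ count but to discard it: commit to $n=2$, where only $\kappa_1=1$ and $\kappa_2$ enter and the additivity of the variance does all the work.
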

\begin{proof}
It is clear that $E(\mu)=1$, and hence by Remark \ref{variance} we know that $\kappa_2(\mu)=Var(\mu)=k \sigma^2$. By Lemma \ref{Lbound} we have that $\kappa_2(\mu)=k \sigma^2\leq L_k-1$.
\end{proof}

Now, we give an upper bound for the support of the free multiplicative convolution $\mu_1\boxtimes \dots \boxtimes \mu_k$. 

\begin{proposition} \label{support}
There exist a universal constant $C$, such that for all $k$ and all $\mu_1,\dots ,\mu_k$  probability measures supported on $[0,L]$, satisfying $E(\mu_i)=1$, $i=1,\dots ,k$,  the measure $\mu:=\mu_1\boxtimes \dots \boxtimes \mu_k$ satisfies \[Supp(\mu)\subseteq [0,CL(k+1)].\] In general, $C$ may be taken $\leq 26e$. If the measures $\mu_i$, $1\leq i\leq k$ 
have non-negative free cumulants, $C$ may be taken $\leq e$.
\end{proposition}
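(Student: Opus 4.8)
The plan is to estimate the $n$-th moment $m_n(\mu)$ directly from the $k$-divisible expansion (\ref{kmom}) and then conclude that the support is contained in $[0,R]$ whenever $\limsup_n m_n(\mu)^{1/n}\leq R$. Concretely, I would bound
\[
m_n(\mu)=\sum_{\pi\in NC^k(n)}\kappa_{\pi_1}(\mu_1)\cdots\kappa_{\pi_k}(\mu_k)
\]
in absolute value by replacing each factor $|\kappa_{|V|}(\mu_i)|$ using Lemma \ref{Lbound}, i.e.\ by $(26L)^{|V|-1}$ (and by $1$ when $|V|=1$, and by $\sigma^2\le L-1\le L$ when $|V|=2$ — but the crude bound $(26L)^{|V|-1}$ already covers everything uniformly). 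Since $Kr(\pi)$ has $|Kr(\pi)|$ blocks, whose sizes sum to $kn$, the product $\prod_j |\kappa_{\pi_j}|$ is at most $(26L)^{kn-|Kr(\pi)|}=(26L)^{kn-(kn+1-|\pi|)}=(26L)^{|\pi|-1}$, using the Kreweras identity $|\pi|+|Kr(\pi)|=kn+1$. So the whole sum is dominated by $\sum_{\pi\in NC^k(n)}(26L)^{|\pi|-1}$, which is at most $(26L)^n\,|NC^k(n)|$ (crudely bounding $|\pi|\le n$, since a $k$-divisible partition of $[kn]$ has at most $n$ blocks).

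Next I would invoke Proposition \ref{k-divisible}: $|NC^k(n)|=\binom{(k+1)n}{n}/(kn+1)\le\binom{(k+1)n}{n}$. By Stirling, $\binom{(k+1)n}{n}^{1/n}\to \frac{(k+1)^{k+1}}{k^k}$ as $n\to\infty$, and $\frac{(k+1)^{k+1}}{k^k}=(k+1)\bigl(1+\tfrac1k\bigr)^k< e(k+1)$. Hence $\limsup_n m_n(\mu)^{1/n}\le 26L\cdot e(k+1)$, which gives $Supp(\mu)\subseteq[0,26e\,L(k+1)]$, i.e.\ $C\le 26e$. For the non-negative-cumulant case, the estimate $(26L)^{|V|-1}$ is replaced by the sharp bound: if all $\kappa_n(\mu_i)\ge 0$, then since $E(\mu_i)=1$ forces $\kappa_1=1$, and the moments are bounded by $L^{n-1}$ as in Lemma \ref{Lbound}, one gets $\kappa_n(\mu_i)\le$ a quantity growing like $L^{n-1}$ — more carefully, one can argue that $\sum_{\pi\in NC^k(n)}\kappa_{\pi_1}(\mu_1)\cdots\kappa_{\pi_k}(\mu_k)$ with non-negative summands telescopes against $|NC^k(n)|\cdot L^{?}$; the cleanest route is to note that with $\kappa_1=1$ and $\kappa_n\le L^{n-1}$ (which holds because $m_n\le L^{n-1}$ and the cumulants of a measure with these moment bounds are themselves bounded by $L^{n-1}$ when non-negative), the product over blocks is at most $L^{kn-|Kr(\pi)|}=L^{|\pi|-1}\le L^{n-1}$, so $m_n(\mu)\le L^{n-1}|NC^k(n)|$, and the Stirling asymptotics give the factor $e(k+1)$ with no $26$, hence $C\le e$.

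The main obstacle is the non-negative-cumulant refinement: one must justify that non-negativity of the free cumulants $\kappa_n(\mu_i)$, together with $E(\mu_i)=1$ and $Supp(\mu_i)\subseteq[0,L]$, yields the sharp per-block bound $\kappa_n(\mu_i)\le L^{n-1}$ rather than the lossy $(26L)^{n-1}$ from the Möbius estimate. I expect this to follow from the moment bound $m_n(\mu_i)\le L^{n-1}$ (proved in Lemma \ref{Lbound}) by an induction on the moment-cumulant formula: writing $m_n=\kappa_n+\sum_{\pi\neq 1_n}\kappa_\pi$ with all terms non-negative gives $\kappa_n\le m_n\le L^{n-1}$ immediately. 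The only subtlety is that this requires $\kappa_n\ge 0$ for \emph{all} blocks appearing, which is exactly the hypothesis; everything else is bookkeeping with the Kreweras block-count identity and Stirling's formula, both already available in the preliminaries.
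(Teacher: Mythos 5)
Your proposal is correct and follows essentially the same route as the paper: bound each block factor by $(26L)^{|V|-1}$ via Lemma \ref{Lbound}, use the Kreweras identity $|\pi|+|Kr(\pi)|=kn+1$ to reduce the product to $(26L)^{|\pi|-1}\leq (26L)^{n}$, count $k$-divisible partitions with Proposition \ref{k-divisible}, and apply Stirling to extract the factor $e(k+1)$ from $\limsup_n m_n(\mu)^{1/n}$. Your observation that non-negative free cumulants give $\kappa_n(\mu_i)\leq m_n(\mu_i)\leq L^{n-1}$ is exactly the (implicit) justification behind the paper's replacement of $26L$ by $L$ in that case.
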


\begin{proof}
By Equation (\ref{kmom}) we get
\begin{equation}\label{kmom3}
m_n(\mu)=\sum_{\pi \in NC^k(n)}\kappa_{\pi_1}(\mu_1)\dots \kappa_{\pi_k}(\mu_k)
\end{equation}
Since a $k$-divisible partition $\pi\in NC^k(n)$ has at most $n$ blocks, we know that \[|\pi_1|+\dots+|\pi_k|=|Kr(\pi)|=kn+1-|\pi|\geq(k-1)n+1.\] Now, let $\tilde L=26L$. By Lemma \ref{Lbound}, we know that $\kappa_{\pi_i}(\mu_i)\leq (\tilde L)^{n-|\pi_i|}$. Hence
\begin{eqnarray}
\sum_{\pi \in NC^k(n)}\kappa_{\pi_1}(\mu_1)\dots \kappa_{\pi_k}(\mu_k) &\leq& \sum_{\pi \in NC^k(n)} (\tilde L)^{kn-(|\pi_1|+\dots+|\pi_k|)} \\
&\leq &\sum_{\pi \in NC^k(n)}(\tilde L)^{n}\\
&= &\frac{\binom{(k+1)n}{n}}{kn+1}(\tilde L)^{n}
\end{eqnarray}
By taking the $n$-th root and the use of Stirling approximation formula, we obtain that
\begin{eqnarray}
\limsup_{n\to\infty}(m_n(\mu))^{1/n}&=&
\frac{(k+1)^{(k+1)}}{k^k}(\tilde L) \\
&\leq & (k+1)e\tilde L.
\end{eqnarray}
 If $\mu$ has non-negative free cumulants we may replace $\tilde L$ by $L$. 
\end{proof}

One may think from Propositions \ref{lowbound} and \ref{support} that the measures $\nu_k:=D_{1/k}(\mu^k)$ converge to a non-trivial limit. However, since $E[\nu_k]\to0$ and $E[(\nu_k)^2]\to0$, then $\nu_k\to\delta_0$.

\section{More applications and examples}

In this section we want to show some examples of how Theorem \ref{main} may be used to calculate free cumulants.

\begin{example} (Product of free Poissons)
Theorem \ref{main} takes a very easy form in the particular case $\mu_i=\mathrm{m}$, where $\mathrm{m}$ is the Marchenko-Pastur distribution of parameter $1$. Indeed, since $\kappa_n(\mathrm{m})=1$, we get
\begin{equation*}
\kappa_n(\mathrm{m}^{\boxtimes k})=\sum_{\pi\in NC^k(n)}1=\frac{\binom{kn}{n}}{(k-1)n+1},
\end{equation*}
and
\begin{equation*}
m_n(\mathrm{m}^{\boxtimes k})=\sum_{\pi\in NC_k(n)}1=\frac{\binom{(k+1)n}{n}}{kn+1}.
\end{equation*}
Moreover, from the last equation one can easily calculate the supremum of the support $L_k=(k+1)^{k+1}/k^k$.
\end{example}

\begin{example} (Product of shifted semicirculars)
For $\sigma^2\leq \frac{1}{4}$, let $\omega_{+}:=\omega_{1,\sigma^2}$ be the shifted Wigner distribution with mean $1$ and variance $\sigma^2$. The density of $\omega_{+}$ is given by
\begin{equation*}
\omega_{1,\sigma^2}(x)=\frac{1}{2\pi \sigma^2}\sqrt{4\sigma^2-(x-1)^{2}}\cdot 1_{[1-2\sigma,1+2\sigma]}(x)\mathrm{d}x,
\end{equation*}
and its free cumulants are $\kappa_{1}(\omega_{+}) = 1$, $\kappa_{2}(\omega_{+}) =\sigma^2$
and $\kappa_{n}(\omega_{+})=0$ for $n>2.$

 We want to calculate the free cumulants of $\omega_{+}^{\boxtimes k}$, $k\geq 2$. So let $a_1,\dots,a_k$
 be free random variables with distribution $\omega_{+}$. By Theorem \ref{main}, the free
cumulants of $a:=a_1\cdots a_k$ are given by
\begin{eqnarray}
\kappa_{n}(a)&=&\sum_{\pi \in NC^k(n)}\kappa_{Kr(\pi)}(a_1,\dots,a_k).
\end{eqnarray}
If $Kr(\pi)$ contains a block of size greater than $2$, then $\kappa_{Kr(\pi)}=0$. Hence the sum runs actually over $NC(k,n)_{2,1}$. Therefore each summand has the common contribution of $(\sigma^2)^{n-1}$ and by Equation (\ref{sizeH2}) we know the number of summands. Then the free cumulants are \begin{equation} \label{wignercum}
k\frac{((k-1)n)!(\sigma^2)^{n-1}}{(n-1)!((k-2)n+2)!}.
\end{equation}

Note that in this example $m_n(a)\geq \kappa_n(a)$ (since all the free cumulants are positive) and $L=1+2\sigma$. By Proposition \ref{support} and an application of Stirling's formula to Equation (\ref{wignercum}) the supremum $L_k$ of the support 
of $\omega_+^{\boxtimes k}$ satisfies \[(k+1)e(1+2\sigma)\geq L_k =\limsup_{n\to\infty}(m_n(a))^{1/n}\geq\limsup_{n\to\infty}(\kappa_n(a))^{1/n}\geq (k-1)e\sigma^2.\] Hence we obtain a better estimate than the rough bound provided by Proposition \ref{lowbound}.
\end{example}

As another application of our main formula, we show that the free cumulants of $\mu^{\boxtimes k}$ become positive for large $k$. This is of some relevance if we recall that our estimates of the support of $\mu^{\boxtimes k}$ are better with the presence of non-negative free cumulants.

\begin{theorem}
Let $\mu$ be a probability measure supported on $[0,L]$ with mean $\alpha$ and variance $\sigma^2$.  Then for each $n\geq 1$ there exist a constant $N$ such that for all $k\geq N$, the first $n$ free cumulants of $\mu^{\boxtimes k}$ are non-negative.
\end{theorem}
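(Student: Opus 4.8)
The plan is to work directly from the single‑measure form of formula (\ref{kcum}), namely $\kappa_m(\mu^{\boxtimes k})=\sum_{\pi\in NC_k(m)}\kappa_{Kr(\pi)}(\mu)$ (where $\kappa_{Kr(\pi)}(\mu)=\prod_{V}\kappa_{|V|}(\mu)$, the product over the blocks $V$ of $Kr(\pi)$), and to show that for each fixed $m$ this sum is dominated, as $k\to\infty$, by the contribution of the partitions in $NC(k,m)_{2,1}$, all of which contribute one and the same positive number.

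First I would normalize. If the mean $\alpha$ is $0$ then $\mu=\delta_0$ and the statement is trivial; if $\alpha>0$, then replacing $\mu$ by $D_{1/\alpha}(\mu)$ replaces $\mu^{\boxtimes k}$ by $D_{1/\alpha^k}(\mu^{\boxtimes k})$ and hence multiplies each $\kappa_m(\mu^{\boxtimes k})$ by the positive constant $\alpha^{-km}$, which does not affect signs. So I may assume $E(\mu)=1$, with $\mu$ supported on $[0,L]$ after renaming and with variance $\sigma^2$; if $\sigma^2=0$ then $\mu=\delta_1$ and everything is trivial, so assume $\sigma^2>0$, and recall that then $L\ge1$ and, by Lemma \ref{Lbound}, $\sigma^2\le L-1<26L$. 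The case $m=1$ is immediate since $\kappa_1(\mu^{\boxtimes k})=1>0$.

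Next, fix $m$ with $2\le m\le n$ and split the sum in (\ref{kcum}):
\[\kappa_m(\mu^{\boxtimes k})=\sum_{\pi\in NC(k,m)_{2,1}}\kappa_{Kr(\pi)}(\mu)\;+\sum_{\pi\in NC_k(m)\setminus NC(k,m)_{2,1}}\kappa_{Kr(\pi)}(\mu).\]
For the first sum, every $\pi\in NC(k,m)_{2,1}$ has $Kr(\pi)$ consisting of $b_1=(k-2)m+2$ singletons and $b_2=m-1$ pairs, so $\kappa_{Kr(\pi)}(\mu)=\kappa_1(\mu)^{b_1}\kappa_2(\mu)^{b_2}=(\sigma^2)^{m-1}$, whence the first sum equals $|NC(k,m)_{2,1}|\,(\sigma^2)^{m-1}>0$. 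For the second sum I would use a uniform bound valid for every $\pi\in NC_k(m)$: since $\pi$ has $m$ blocks, $Kr(\pi)$ has $km+1-m=(k-1)m+1$ blocks among $km$ points, so its block sizes $j$ with multiplicities $b_j$ satisfy $\sum_j(j-1)b_j=km-((k-1)m+1)=m-1$; since a block of size $j$ contributes to $\kappa_{Kr(\pi)}(\mu)$ a factor of modulus at most $(26L)^{j-1}$ (equal to $1$ when $j=1$, at most $\sigma^2\le L-1<26L$ when $j=2$, and $<(26L)^{j-1}$ when $j\ge3$ by Lemma \ref{Lbound}), and $26L\ge1$, we get $|\kappa_{Kr(\pi)}(\mu)|\le(26L)^{m-1}$ for every such $\pi$. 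Hence
\[\kappa_m(\mu^{\boxtimes k})\;\ge\;|NC(k,m)_{2,1}|\,(\sigma^2)^{m-1}-\bigl(|NC_k(m)|-|NC(k,m)_{2,1}|\bigr)(26L)^{m-1}.\]

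Finally, by (\ref{H2}) (applied with $m$ in place of $n$) the ratio $|NC(k,m)_{2,1}|/|NC_k(m)|$ tends to $1$ as $k\to\infty$, so $\bigl(|NC_k(m)|-|NC(k,m)_{2,1}|\bigr)/|NC(k,m)_{2,1}|\to0$; since $(26L/\sigma^2)^{m-1}$ is a fixed finite constant, there is $N_m$ such that the right‑hand side above is positive for all $k\ge N_m$, i.e. $\kappa_m(\mu^{\boxtimes k})>0$. Taking $N:=\max_{2\le m\le n}N_m$ then gives the claim. The only step requiring genuine care is the uniform estimate $|\kappa_{Kr(\pi)}(\mu)|\le(26L)^{m-1}$ over all of $NC_k(m)$, which rests on the identity $\sum_j(j-1)b_j=m-1$ for Kreweras complements of $k$‑equal partitions; the rest is bookkeeping together with the already‑established asymptotics (\ref{H2}).
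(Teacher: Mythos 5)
Your proof is correct and takes essentially the same route as the paper's own: split $NC_k(m)$ into $NC(k,m)_{2,1}$ and its complement, observe that each partition in the former contributes exactly $(\sigma^2)^{m-1}$, bound each remaining term uniformly by a constant depending only on $L$ and $m$, and conclude positivity for large $k$ from the asymptotic ratio (\ref{H2}). Your reduction to $E(\mu)=1$ by dilation and your blockwise estimate $|\kappa_{Kr(\pi)}(\mu)|\leq(26L)^{m-1}$ via the identity $\sum_j(j-1)b_j=m-1$ are minor (and in fact slightly cleaner) variants of the paper's handling, which instead carries the mean $\alpha$ through the estimate by factoring out $\alpha^{(k-2)n+2}$ and using $\tilde\alpha$ together with the bound $|\kappa_n(\mu)|\leq 16L^n$.
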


\begin{proof}
Clearly it is enough to show that, for each $n\geq 1$, there exist $N_0$ such that the $n$-th free cumulant of $\mu^{\boxtimes k}$ is positive for all $k\geq N_0$. 

Let $n>1$ and $\tilde\alpha:=\max\{\alpha^{n-1},1\}$. By the same arguments as in Lemma \ref{Lbound} one can show that $|\kappa_n(\mu)|\leq16(L^n)$. Then by Theorem \ref{main} we have
\begin{eqnarray*}
\kappa_n(\mu^{\boxtimes k})&=&
\sum_{\pi \in NC(k,n)_{2,1}}\kappa_{Kr(\pi)}(\mu)+\sum_{\substack{\pi \in NC_{k}(n) \\ \pi \notin NC(k,n)_{2,1}}} \kappa_{Kr(\pi)}(\mu) \\
&\geq & \alpha^{(k-2)n+2}\left(\sum_{\pi \in NC(k,n)_{2,1}}\sigma^{2n-2}-\sum_{\substack{\pi \in NC_{k}(n) \\ \pi \notin NC(k,n)_{2,1}}} (16L)^{n-1}\tilde\alpha\right)\\
&=&\alpha^{(k-2)n+2}(|NC(k,n)_{2,1}|\sigma^{2n-2}-(|NC_{k}(n)|-|NC(k,n)_{2,1}|)(16L)^{n-1}\tilde\alpha).
\end{eqnarray*}
The factor $\alpha^{(k-2)n+2}$ is positive and the rest of the expression becomes positive for all $k$ larger than some $N_0$, since, by Equation (\ref{H2}), $NC(k,n)_{2,1}/NC_k(n)\rightarrow 1$ as $k\rightarrow \infty$.
\end{proof}
It would be interesting to investigate whether or not all free cumulants become positive.

In a recent paper \cite{SaYo}, Sakuma and Yoshida found a probability measure $\mathfrak{h}_{\sigma^2}$, arising from a limit theorem, which is infinitely divisible with respect to both multiplicative and additive free convolutions. Using our methods, we give another proof of this limit theorem. We restrict to the case $E(\mu)=1$. The general case follows directly from this.
\begin{proposition} \label{Sakuma}
Let $\mu$ be a probability measure on supported on $[0,L]$, with $E(\mu)=1$ and $Var(\mu)=\sigma^2$, then \[\lim_{k\to\infty}D_{1/k}\left((\mu^{\boxtimes k})^{\boxplus k}\right)=\mathfrak{h}_{\sigma^2}\]
where $\mathfrak{h}_{\sigma^2}$ is the unique probability measure with free cumulants  $\kappa_n(\mathfrak{h}_{\sigma^2})=\frac{(\sigma^2n)^{n-1}}{n!}$.
\end{proposition}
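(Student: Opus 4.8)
The plan is to prove the convergence by showing that the free cumulants converge, since a compactly supported measure is determined by its cumulants. Write $\nu_k:=D_{1/k}\bigl((\mu^{\boxtimes k})^{\boxplus k}\bigr)$. First I would use the two elementary rules recalled in Section~2 --- additivity of free cumulants under $\boxplus$ and $\kappa_n(D_c\rho)=c^n\kappa_n(\rho)$ --- to reduce to $\kappa_n(\nu_k)=k^{-n}\cdot k\cdot\kappa_n(\mu^{\boxtimes k})=k^{1-n}\kappa_n(\mu^{\boxtimes k})$, and then apply Theorem~\ref{main} (with all factors equal to $\mu$) to get $\kappa_n(\mu^{\boxtimes k})=\sum_{\pi\in NC_k(n)}\kappa_{Kr(\pi)}(\mu)$. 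The goal becomes $k^{1-n}\sum_{\pi\in NC_k(n)}\kappa_{Kr(\pi)}(\mu)\to\frac{(\sigma^2 n)^{n-1}}{n!}$ for every fixed $n$.

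Next I would split the sum over $NC_k(n)$ into the part indexed by $NC(k,n)_{2,1}$ and the remainder. For $\pi\in NC(k,n)_{2,1}$ the complement $Kr(\pi)$ has exactly $n-1$ blocks of size $2$ and $n(k-2)+2$ singletons, so, using $\kappa_1(\mu)=E(\mu)=1$ and $\kappa_2(\mu)=Var(\mu)=\sigma^2$, every such summand equals $(\sigma^2)^{n-1}$; hence this part contributes $|NC(k,n)_{2,1}|(\sigma^2)^{n-1}$. By the count $(\ref{sizeH2})$, $|NC(k,n)_{2,1}|=k\,\frac{((k-1)n)!}{(n(k-2)+2)!\,(n-1)!}$; the factorial ratio is a product of $n-2$ consecutive integers each asymptotic to $kn$, so a direct estimate gives $k^{1-n}|NC(k,n)_{2,1}|\to \frac{n^{n-2}}{(n-1)!}=\frac{n^{n-1}}{n!}$, and therefore $k^{1-n}|NC(k,n)_{2,1}|(\sigma^2)^{n-1}\to\frac{(\sigma^2 n)^{n-1}}{n!}=\kappa_n(\mathfrak{h}_{\sigma^2})$. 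For the remaining $\pi$ I would invoke the uniform bound $|\kappa_m(\mu)|\leq(26L)^{m-1}$ from the proof of Lemma~\ref{Lbound}, together with $\sum_{V\in Kr(\pi)}(|V|-1)=kn-|Kr(\pi)|=kn-((k-1)n+1)=n-1$ (valid because $\pi$ has $n$ blocks), which yields $|\kappa_{Kr(\pi)}(\mu)|\leq(26L)^{n-1}$ for each such $\pi$. Thus the remainder contributes at most $(26L)^{n-1}\,k^{1-n}\bigl(|NC_k(n)|-|NC(k,n)_{2,1}|\bigr)$ in absolute value, which I would rewrite as $(26L)^{n-1}\bigl(1-\tfrac{|NC(k,n)_{2,1}|}{|NC_k(n)|}\bigr)\,k^{1-n}|NC_k(n)|$; this tends to $0$ because the first parenthesis tends to $0$ by $(\ref{H2})$ while $k^{1-n}|NC_k(n)|\to\frac{n^{n-1}}{n!}$ stays bounded. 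Combining, $\kappa_n(\nu_k)\to\frac{(\sigma^2 n)^{n-1}}{n!}$ for all $n$.

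To conclude weak convergence I would use the method of moments. By the moment--cumulant formula each $m_n(\nu_k)$ is a fixed polynomial in $\kappa_1(\nu_k),\dots,\kappa_n(\nu_k)$, so $m_n(\nu_k)$ converges to that polynomial evaluated at the limit cumulants $\frac{(\sigma^2 j)^{j-1}}{j!}$. Since $\bigl(\frac{(\sigma^2 n)^{n-1}}{n!}\bigr)^{1/n}\to e\sigma^2$, these limit cumulants, hence (again by moment--cumulant, bounding $C_n\leq 4^n$) the limit moments, grow at most exponentially; therefore the limiting moment problem is determinate, and its unique solution is a compactly supported probability measure on $[0,\infty)$ --- namely $\mathfrak{h}_{\sigma^2}$, whose existence may also be taken from \cite{SaYo}. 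The method of moments then gives $\nu_k\Rightarrow\mathfrak{h}_{\sigma^2}$.

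The main obstacle is the second paragraph: the naive estimate $k^{1-n}\sum_{\pi\in NC_k(n)}|\kappa_{Kr(\pi)}(\mu)|\leq(26L)^{n-1}k^{1-n}|NC_k(n)|$ does \emph{not} tend to $0$ (it tends to $(26L)^{n-1}\frac{n^{n-1}}{n!}$), so the argument genuinely relies on the concentration fact $(\ref{H2})$ that asymptotically all $k$-equal partitions lie in $NC(k,n)_{2,1}$, and on tracking the exact power of $k$ in the counts $(\ref{sizeH2})$ and $|NC_k(n)|$ so that the dominant term yields precisely $n^{n-1}/n!$ rather than some other constant. Everything else --- the dilation/additivity reduction, the cumulant bound, and the method-of-moments step --- is routine.
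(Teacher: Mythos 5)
Your proposal is correct and follows essentially the same route as the paper's proof: reduce via additivity under $\boxplus$ and the dilation rule to $k^{1-n}\kappa_n(\mu^{\boxtimes k})$, split the sum from Theorem~\ref{main} over $NC_k(n)$ into the $NC(k,n)_{2,1}$ part (each term $(\sigma^2)^{n-1}$, count from (\ref{sizeH2}) giving $n^{n-1}/n!$ by Stirling) and a remainder killed by the uniform cumulant bound together with (\ref{H2}). Your explicit block-size accounting for the bound $(26L)^{n-1}$ and the closing method-of-moments/determinacy step are slightly more detailed than the paper's write-up, but they do not change the argument.
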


\begin{proof}
First, we see by Stirling's Formula that 
\begin{equation}
\lim_{k\to\infty}\frac{1}{k^{n-1}}\sum_{\pi \in NC(k,n)_{2,1}}\kappa_{Kr(\pi)}(\mu)=\lim_{k\to\infty}\frac{1}{k^{n-1}}\sum_{\pi \in NC(k,n)_{2,1}}\sigma^{2n-2}=\frac{n^{n-1}}{n!}\sigma^{2n-2}.
\end{equation}
Therefore, by Equation (\ref{H2}), $|NC(k,n)_{2,1}|$ and $|NC_k(n)|$ are of order $k^{n-1}$ as $k\rightarrow\infty$ and \[\frac{|NC_k(n)|-|NC(k,n)_{2,1}|}{|NC_k(n)|}\to 0.\] By using the bound $|\kappa_{n}(\mu)|\leq16(L^n)$, we obtain that \begin{eqnarray}
\lim_{k\to\infty}\frac{1}{k^{n-1}}\sum_{\substack{\pi \in NC_{k}(n) \\ \pi \notin NC(k,n)_{2,1}}} \kappa_{Kr(\pi)}(\mu)\leq \lim_{k\to\infty}\frac{1}{k^{n-1}}\sum_{\substack{\pi \in NC_{k}(n) \\ \pi \notin NC(k,n)_{2,1}}} (16L)^n=0.
\end{eqnarray}
Hence
\begin{eqnarray*}
\lim_{k\to\infty}\kappa_n\left(D_{1/k}\left((\mu^{\boxtimes k})^{\boxplus k}\right)\right)&=&
\lim_{k\to\infty}\frac{1}{k^{n-1}}\kappa_n(\mu^{\boxtimes k})\\&=&
\lim_{k\to\infty}\frac{1}{k^{n-1}}\sum_{\pi \in NC(k,n)_{2,1}}\kappa_{Kr(\pi)}(\mu)
\\&+& \lim_{k\to\infty}\frac{1}{k^{n-1}}\sum_{\substack{\pi \in NC^{k}(n) \\ \pi \notin NC(k,n)_{2,1}}} \kappa_{Kr(\pi)}(\mu)\\
&=&\frac{n^{n-1}}{n!}\sigma^{2n-2}.
\end{eqnarray*}
\end{proof}

\begin{remark}
In view of Remark \ref{Boolean}, some results of this section regarding the free cumulants can be also obtained for the Boolean cumulants. In particular, a second limit theorem from \cite{SaYo}, stating that \[\lim_{k\to\infty}D_{1/k}\left((\mu^{\boxtimes k})^{\uplus k}\right)=\mathfrak{s}_{\sigma^2},\] where $\mathfrak{s}_{\sigma^2}$ is the unique probability measure with Boolean cumulants $b_n(\mathfrak{h}_{\sigma^2})=\frac{(\sigma^2n)^{n-1}}{n!}$, can be proved following the exact same lines as in Proposition \ref{Sakuma}.
\end{remark}

\appendix

\section{Proof of Proposition \ref{Prop}}

\begin{remark} \label{intblock}
A useful characterization of non-crossing partitions is that, for any $\pi\in NC(n)$, one can always find a block $V=\{r+1,\dots,r+s\}$ containing consecutive numbers such that if one removes this block from $\pi$, the partition $\pi\setminus V\in NC(n-s)$ remains non-crossing.
\end{remark}

For a partition $\pi\in NC(n)$ will often write $r\sim_{\pi}s$, meaning that $r,s$ belong to the same block of $\pi$.

Let us introduce two operations on non-crossing partitions. For $n,k\geq 1$ and $r\leq n$, we define $I_r^k:NC(n)\to NC(n+k)$, where $I_r^k(\pi)$ is 
obtained from $\pi$ by duplicating the element in the position $r$, identifying the copies and inserting $k-1$ singletons between the two copies.
More precisely, for $\pi\in NC(n)$, $I_k^r(\pi)\in NC(n+k)$ is the partition given by the relations:

\begin{enumerate}[{\rm(1)}]
\item For $1\leq m_1, m_2 \leq r$,
\[m_1\sim_{I_r^k(\pi)}m_2\ \Leftrightarrow m_1\sim_{\pi}m_2.\]
\item For $r+k \leq m_1, m_2 \leq n+k$,
\[m_1\sim_{I_r^k(\pi)}m_2\ \Leftrightarrow m_1-k\sim_{\pi}m_2-k.\]
\item For $1\leq m_1 \leq r$ and $r+k+1 \leq m_2 \leq n+k$,
\[m_1\sim_{I_r^k(\pi)}m_2\ \Leftrightarrow m_1\sim_{\pi}m_2-k.\]
\item $r\sim_{I_r^k(\pi)}r+k$.
\end{enumerate}

The operation $\tilde I_r^k:NC(n)\to NC(n+k)$ consists of inserting an interval block of size $k$ between the positions $r-1$ and $r$ in $\pi$. We will skip the explicit definition.

The importance of these operations is that they are linked by the relation 
\begin{equation} \label{II}
Kr(I_r^k(\pi))=\tilde I_r^k(Kr(\pi)).
\end{equation}

Our operations preserve properties of partitions, as shown in the following lemma.

\begin{lemma} \label{lema1}
Let $\pi\in NC(nk)$, $r\leq nk$, $s\geq 1$. Then

i) $\pi$ is $k$-preserving if and only if $I_r^{sk}(\pi)$ is $k$-preserving.

ii) $\pi$ is $k$-completing if and only if $I_r^{k}(\pi)$ is $k$-completing.

iii) $\pi$ is $k$-divisible if and only if $\tilde I_r^{sk}(\pi)$ is $k$-divisible.

iv) $\pi$ is $k$-equal if and only if $\tilde I_r^k(\pi)$ is $k$-equal.
\end{lemma}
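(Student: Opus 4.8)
The plan is to handle the four equivalences by making the block structure of the inserted partitions fully explicit and then reading off the relevant property; Equation~(\ref{II}) will not be needed. Parts (iii) and (iv) are essentially bookkeeping. By construction, $\tilde I_r^{sk}$ replaces $\pi\in NC(n)$ by an order-isomorphic copy of itself on $[n+sk]$ together with one extra interval block of size $sk$, and it changes no other block; hence the multiset of block sizes of $\tilde I_r^{sk}(\pi)$ is that of $\pi$ with one additional entry equal to $sk$. Since $k\mid sk$, every block of $\tilde I_r^{sk}(\pi)$ has size divisible by $k$ if and only if every block of $\pi$ does, which is (iii). Taking $s=1$, the extra block has size exactly $k$, so the same argument with ``size divisible by $k$'' replaced by ``size equal to $k$'' gives (iv).

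For (i), write $\psi\colon[nk]\hookrightarrow[nk+sk]$ for the order embedding that fixes $1,\dots,r$ and shifts each of $r+1,\dots,nk$ by $sk$. From the defining relations of $I_r^{sk}$ one sees that the blocks of $I_r^{sk}(\pi)$ are: the $sk-1$ singletons $\{r+1\},\dots,\{r+sk-1\}$; the set $\psi(V)$ for each block $V$ of $\pi$ with $r\notin V$; and $\psi(V)\cup\{r+sk\}$ for the unique block $V$ with $r\in V$. Since $\psi(j)\equiv j$ and $r+sk\equiv r\pmod k$, each of these blocks meets exactly the residue classes modulo $k$ met by the corresponding block of $\pi$, while the inserted singletons meet a single class. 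Therefore $I_r^{sk}(\pi)$ is $k$-preserving precisely when every block of $\pi$ lies in one residue class modulo $k$, i.e.\ when $\pi$ is $k$-preserving.

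Part (ii) is where the work lies. Since a $k$-completing partition is by definition $k$-preserving and satisfies $\pi\vee\rho_k^n=1_{nk}$, the $k$-preserving half of (ii) is the case $s=1$ of (i), and it remains to prove that $\pi\vee\rho_k^n=1_{nk}$ if and only if $I_r^k(\pi)\vee\rho_k^{n+1}=1_{(n+1)k}$. I would reformulate this as a connectivity statement: letting $B_1,\dots,B_n$ be the blocks of $\rho_k^n$, let $G_\pi$ be the graph on $\{1,\dots,n\}$ in which $i$ is adjacent to $j$ whenever some block of $\pi$ meets both $B_i$ and $B_j$; then $\pi\vee\rho_k^n=1_{nk}$ exactly when $G_\pi$ is connected. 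Locating $r$ in $B_i$, a careful look at the renumbering shows that passing from $\pi$ to $I_r^k(\pi)$ leaves $B_1,\dots,B_{i-1}$ untouched, splits $B_i$ between the new blocks $B_i'$ and $B_{i+1}'$ of $\rho_k^{n+1}$, and shifts $B_{i+1},\dots,B_n$ to $B_{i+2}',\dots,B_{n+1}'$, while the block of $I_r^k(\pi)$ containing the two copies of $r$ always meets both $B_i'$ and $B_{i+1}'$. Consequently $i$ and $i+1$ are always adjacent in $G_{I_r^k(\pi)}$, and contracting this edge yields $G_\pi$; hence one graph is connected if and only if the other is, and (ii) follows by combining this with (i).

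I expect the bookkeeping in (ii) --- tracking exactly which block of $\rho_k^{n+1}$ receives each old element and each inserted singleton when $r$ is not a multiple of $k$, so that the ``contract the edge $\{i,i+1\}$'' claim can be verified --- to be the only genuinely delicate point; the residue and size computations behind (i), (iii) and (iv) become routine once the block structures of $I_r^{sk}$ and $\tilde I_r^{sk}$ are written down explicitly.
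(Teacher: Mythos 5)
Your proposal is correct and follows essentially the same route as the paper: parts (iii) and (iv) are immediate from the block structure of $\tilde I_r^{sk}$, part (i) follows because the shift by a multiple of $k$ (and the identification $r\sim r+sk$) preserves residues modulo $k$, and part (ii) is handled exactly as in the paper by the graph on the interval blocks of $\rho_k^n$, where $I_r^k$ splits one vertex into two joined by the edge coming from $r\sim r+k$, so connectivity is preserved in both directions. The only difference is presentational: you make the block description of $I_r^{sk}(\pi)$ and the redistribution of the $\rho$-intervals explicit, where the paper leaves these as ``easy to see.''
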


\begin{proof}
i) By definition of $I_r^k(\pi)$, the relations indicated by $I_r^k(\pi)$ are obtained by relations indicated by $\pi$, with possible shifts by $ks$ (which do not modify congruences modulo $k$). Hence the equivalence follows.

ii) One should think of the block intervals of $\rho_n^k$ as vertices of a graph. For $\pi\in NC(nk)$, an edge will join two vertices $V,W$, if there are elements $r\in V$, $s\in W$ such that $r\sim_{\pi}s$. Then $\pi\vee\rho_n^k=1_{nk}$ if and only if the graph is connected.

It is easy to see that the effect of $I_r^k$ on the graph of $\pi$ is just splitting the vertex corresponding to the block $V$ containing $r$ into $2$ vertices $V_1,V_2$. The edges between all other vertices are preserved, while the edges which were originally joined to $V$ will now be joined either to $V_1$ or $V_2$. Finally, the last additional relation $r\sim_{I_r^k(\pi)}r+k$ means an edge joining $V_1$ to $V_2$. Therefore, it is clear that the connectedness of the two graphs are equivalent.

iii) and iv) are trivial.
\end{proof}

Now we want to show that we can produce all partitions of our interest by applying our operations to elementary partitions.

\begin{lemma} \label{lema2}
i) Let $\pi\in NC(kn)$ be $k$-preserving. Then there exist $m\geq 0$ and numbers $q_0,q_1,\dots ,q_m$, $r_1,\dots ,r_m$ such that
\begin{equation} \label{repn1}
\pi=I_{r_m}^{kq_m}\circ\dots\circ I_{r_1}^{kq_1}(0_{q_0}).
\end{equation}

ii) Let $\pi\in NC(kn)$ be $k$-completing. Then there exist $m\geq 0$ and numbers $r_1,\dots ,r_m$ such that
\begin{equation}\pi=I_{r_m}^{k}\circ\dots\circ I_{r_1}^{k}(0_k).\end{equation}

iii) Let $\pi\in NC(kn)$ be $k$-divisible. Then there exist $m\geq 0$ and numbers $q_0,q_1,\dots ,q_m$, $r_1,\dots ,r_m$ such that
\begin{equation}\pi=\tilde I_{r_m}^{kq_m}\circ\dots\circ \tilde I_{r_1}^{kq_1}(1_{q_0}).\end{equation}

iv) Let $\pi\in NC(kn)$ be $k$-equal. Then there exist $m\geq 0$ and numbers $r_1,\dots ,r_m$ such that
\begin{equation}\pi=\tilde I_{r_m}^{k}\circ\dots\circ \tilde I_{r_1}^{k}(1_k).\end{equation}
\end{lemma}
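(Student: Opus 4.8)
The plan is to prove all four parts by induction on $n$, using Remark \ref{intblock} (the existence of an "interval block" whose removal preserves non-crossing-ness) together with Lemma \ref{lema1} and the Kreweras-complement identity (\ref{II}). The four statements are genuinely parallel: parts (i) and (ii) are dual, via the Kreweras complement, to parts (iii) and (iv) respectively, so I would first prove (iii) and (iv) directly, and then deduce (i) and (ii) from them. Concretely, for a $k$-divisible partition $\pi$, Proposition \ref{Prop}(i) gives $\pi = Kr(\sigma)$ with $\sigma$ $k$-preserving, and (\ref{II}) shows that a representation of $\sigma$ built from the operators $I_{r_j}^{kq_j}$ starting from $0_{q_0}$ transforms, block by block, into a representation of $\pi$ built from the operators $\tilde I_{r_j}^{kq_j}$ starting from $Kr(0_{q_0}) = 1_{q_0}$; and likewise for (ii)$\leftrightarrow$(iv) using $Kr(0_k)=1_k$ and the fact (Lemma \ref{lema1}(ii),(iv)) that $I_r^k$ and $\tilde I_r^k$ preserve $k$-completing and $k$-equal partitions respectively. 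So the real content lies in (iii) and (iv), where I would argue directly.

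For (iv), let $\pi\in NC(kn)$ be $k$-equal. If $n=1$ then $\pi=1_k=0_k$ and we take $m=0$. If $n\geq 2$, by Remark \ref{intblock} applied to $\pi$ there is a block $V=\{r+1,\dots,r+s\}$ of consecutive integers whose removal leaves a non-crossing partition; since $\pi$ is $k$-equal, every block has size exactly $k$, so $s=k$. Removing $V$ and relabelling gives a partition $\pi'\in NC(k(n-1))$ which is again $k$-equal (deleting a whole block cannot change the sizes of the others), and by construction $\pi=\tilde I_{r+1}^{k}(\pi')$. By the induction hypothesis $\pi'=\tilde I_{r_{m-1}}^{k}\circ\cdots\circ\tilde I_{r_1}^{k}(1_k)$, and prepending the operator $\tilde I_{r+1}^{k}$ (reindexing $r_m:=r+1$) yields the desired representation. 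For (iii) the argument is the same, except that the interval block found via Remark \ref{intblock} now has size $s$ a positive multiple of $k$, say $s=kq_m$; removing it yields a $k$-divisible $\pi'\in NC(kn-s)$ with $\pi=\tilde I_{r+1}^{kq_m}(\pi')$, and the induction proceeds, the base case being $1_{q_0}$ once the partition has been reduced to a single block. The exponents $q_j$ and the base size $q_0$ are exactly the data recording the sizes of the blocks peeled off and the size of the final block.

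The main obstacle I anticipate is bookkeeping rather than conceptual: one must check carefully that $\tilde I_r^k$ (and $I_r^k$) really does "undo" the removal of an interval block, i.e. that inserting an interval block of the right size at the right position recovers $\pi$ exactly, including the correct treatment of the relabelling/shift of indices past position $r$. This is precisely the sort of elementary-but-cumbersome verification the authors flagged for the Appendix, and it requires writing out the defining relations (1)--(4) of $I_r^k$ and their $\tilde I$-analogue and matching them against "delete a block of consecutive integers and shift." A secondary point to be careful about in (iii) is that, in reducing to the base case, one may reach a partition that is a single interval block of size $kq_0$ — which is $1_{kq_0}$, consistent with the claim written as $1_{q_0}$ in the statement (reading $q_0$ as the number of size-$k$ sub-blocks, i.e. working in $NC(kq_0)$); I would make this indexing convention explicit to avoid an apparent mismatch. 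Once (\ref{II}) is in hand the transfer to (i) and (ii) is immediate, so no extra difficulty arises there.
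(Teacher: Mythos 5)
Your direct arguments for parts (iii) and (iv) are fine and coincide with the paper's treatment (the paper simply notes these two parts are trivial from Remark \ref{intblock}); a minor slip is that in your base case for (iv) you write $\pi=1_k=0_k$, whereas $1_k\neq 0_k$ and the base partition is $1_k$. The real problem is with (i) and (ii): you propose to deduce them from (iii)/(iv) by invoking Proposition \ref{Prop} to identify $k$-preserving (resp.\ $k$-completing) partitions with Kreweras complements of $k$-divisible (resp.\ $k$-equal) ones. But in this paper Proposition \ref{Prop} is itself proved \emph{from} Lemma \ref{lema2} --- producing that proposition is the sole purpose of the lemma and of the Appendix --- so your argument is circular: the correspondence you want to import is exactly what the lemma is needed to establish, and you give no independent proof of it. Note also that the transfer you sketch via (\ref{II}) runs in the wrong direction for your plan: Equation (\ref{II}) turns a representation of a $k$-preserving $\sigma$ by the operators $I_{r}^{kq}$ into a representation of $Kr(\sigma)$ by the operators $\tilde I_{r}^{kq}$, i.e.\ it derives (iii)-type statements from (i)-type ones; to go the other way you would additionally need to know that the inverse Kreweras complement of a $k$-preserving partition is $k$-divisible, which is again Proposition \ref{Prop}.

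What is missing is a direct proof of (i) and (ii), and that is where the actual content of the lemma lies. The paper proves (i) by induction on $n$: given a $k$-preserving $\pi\neq 0_{nk}$, one finds $r$ and $s\geq 1$ such that $r\sim_{\pi}r+sk$ and all of $r+1,\dots,r+sk-1$ are singletons of $\pi$ (if no such pair exists, then $\pi=0_{nk}$ and one is done); then $\pi=I_{r}^{sk}(\pi')$ for some $\pi'\in NC((n-s)k)$, which is again $k$-preserving by Lemma \ref{lema1}(i), and the induction hypothesis applies. The same scheme with $s=1$, together with Lemma \ref{lema1}(ii), gives (ii). Your interval-block idea does not directly supply such a configuration, because in a $k$-preserving partition the removable piece is not an interval block but two related points a multiple of $k$ apart enclosing only singletons. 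If you replace the appeal to Proposition \ref{Prop} by this direct peeling argument, keeping your (iii)/(iv) as they are, the proof becomes complete and essentially identical to the paper's.
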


\begin{proof}
i) We use induction on $n$. For $n=1$ the only $k$-preserving partition is $0_k$, so the statement holds. So assume that i) holds for $n\leq m$. For $\pi \in NC^k(m)$ suppose that there exist $1\leq r<r+sk \leq km$ such that $r\sim_{\pi}r+sk$ and $r+1,\dots r+sk-1$ are singletons of $\pi$ (if no such pair $(r,s)$ exist, necessarily $\pi=0_{mk}$ and we are done). Then its easy to see that $\pi=I_r^{sk}(\pi')$ for some $\pi'\in NC((n-s)k)$. By Lemma \ref{lema1} i) $\pi'$ is $k$-preserving. By induction hypothesis $\pi'$ has a representation as in Equation (\ref{repn1}) and hence, so does $\pi=I_r^{sk}(\pi')$.

The proof of ii) is similar. The proofs of iii) and iv) are trivial using Remark \ref{intblock}.
\end{proof}

\begin{proof}[Proof of Proposition \ref{Prop}]
We only show the first implication of i). The converse and ii) are similar.

Let $\pi\in NC(kn)$ be $k$-preserving. Then by Lemma \ref{lema2} i) we can express it as \[\pi=I_{r_m}^{kq_m}\circ\dots\circ I_{r_1}^{kq_1}(1_{q_0}).\]
But then we can apply Equation (\ref{II}) at every step, obtaining
\begin{eqnarray}
Kr(\pi)&=&Kr(I_{r_m}^{kq_m}\circ\dots\circ I_{r_1}^{kq_1}(0_{q_0})) \\
&=&\tilde I_{r_m}^{kq_m}\circ Kr(I_{r_{m-1}}^{kq_{m-1}}\dots \circ I_{r_2}^{kq_2}\circ I_{r_1}^{kq_1}(1_{q_0})) \\
&=&\tilde I_{r_m}^{kq_m}\circ \tilde I_{r_{m-1}}^{kq_{m-1}}\circ Kr(I_{r_{m-2}}^{kq_{m-2}}\circ \dots \circ I_{r_2}^{kq_2}\circ I_{r_1}^{kq_1}(1_{q_0})) \\
&& \vdots \\
&=&\tilde I_{r_m}^{kq_m}\circ \dots \circ \tilde I_{r_2}^{kq_2}\circ \tilde I_{r_1}^{kq_1}(Kr(1_{q_0})) \\ 
&=&\tilde I_{r_m}^{kq_m}\circ \dots \circ \tilde I_{r_2}^{kq_2}\circ \tilde I_{r_1}^{kq_1}(0_{q_0}),
\end{eqnarray}
which, by Lemma \ref{lema1} iii) is $k$-divisible.
\end{proof}


\end{document}